\documentclass[11pt,reqno]{amsart}

\usepackage[T1]{fontenc}
\usepackage[utf8]{inputenc}
\usepackage{lmodern}
\usepackage{amsmath,amssymb,mathtools}
\usepackage[initials]{amsrefs}
\usepackage{microtype}
\usepackage{enumitem}
\usepackage{graphicx}
\usepackage{tikz}
\usetikzlibrary{arrows.meta,calc,decorations.pathreplacing}
\usepackage[colorlinks=true,linkcolor=blue!55!black,citecolor=blue!55!black,urlcolor=blue!55!black]{hyperref}
\usepackage[nameinlink,noabbrev]{cleveref}

\usepackage[left=3cm,top=2.8cm,right=3cm,bottom=2.8cm]{geometry}

\setlist{itemsep=2pt,topsep=4pt}
\allowdisplaybreaks
\numberwithin{equation}{section}

\makeatletter
\renewcommand{\@setaddresses}{\par
  \nobreak\begingroup
  \footnotesize\raggedright
  \def\author##1{\nobreak\addvspace\bigskipamount}%
  \def\\{\unskip, \ignorespaces}%
  \interlinepenalty\@M
  \def\address##1##2{\begingroup
    \par\addvspace\bigskipamount\noindent
    \@ifnotempty{##1}{(\ignorespaces##1\unskip) }%
    {\scshape\ignorespaces##2}\par\endgroup}%
  \def\curraddr##1##2{\begingroup
    \@ifnotempty{##2}{\nobreak\noindent\curraddrname
      \@ifnotempty{##1}{, \ignorespaces##1\unskip}\/:\space
      ##2\par}\endgroup}%
  \def\email##1##2{\begingroup
    \@ifnotempty{##2}{\nobreak\noindent\emailaddrname
      \@ifnotempty{##1}{, \ignorespaces##1\unskip}\/:\space
      \ttfamily##2\par}\endgroup}%
  \def\urladdr##1##2{\begingroup
    \def~{\char`\~}%
    \@ifnotempty{##2}{\nobreak\noindent\urladdrname
      \@ifnotempty{##1}{, \ignorespaces##1\unskip}\/:\space
      \ttfamily##2\par}\endgroup}%
  \addresses
  \endgroup
}
\makeatother

\newtheorem{theorem}{Theorem}[section]
\newtheorem{proposition}[theorem]{Proposition}
\newtheorem{lemma}[theorem]{Lemma}
\newtheorem{corollary}[theorem]{Corollary}
\newtheorem{definition}[theorem]{Definition}

\theoremstyle{remark}
\newtheorem{remark}[theorem]{Remark}
\newtheorem{example}[theorem]{Example}

\newcommand{\R}{\mathbb R}
\newcommand{\N}{\mathbb N}
\newcommand{\I}{[0,1]}
\newcommand{\cV}{\mathcal V}
\newcommand{\cP}{\mathcal P}
\newcommand{\cD}{\mathcal D}
\newcommand{\cA}{\mathcal A}
\newcommand{\cQ}{\mathcal Q}
\newcommand{\cR}{\mathcal R}
\newcommand{\Var}{\operatorname{Var}}
\newcommand{\osc}{\operatorname{osc}}
\newcommand{\norm}[1]{\lVert #1\rVert}
\newcommand{\abs}[1]{\lvert #1\rvert}
\newcommand{\eps}{\varepsilon}

\title[A Levin--Milman theorem for Young variation]
{A Levin--Milman theorem for Young variation}

\author[Albuquerque]{N.G.~Albuquerque}
\address[N.G.~Albuquerque]{\mbox{}\newline\indent Departamento de Matem\'{a}tica \newline\indent
	Universidade Federal da Para\'{i}ba \newline\indent
	Jo\~ao Pessoa - PB \newline\indent
	58.051-900 (Brazil)}
\email{ngalbuquerque@mat.ufpb.br}

\author[Bugajewska]{D. Bugajewska}
\thanks{Corresponding author: Daria Bugajewska}
\address[D. Bugajewska]{\mbox{}\newline\indent Faculty of Mathematics and Computer Science, \mbox{}\newline\indent Adam Mickiewicz University,Pozna\'n,\mbox{}\newline\indent 
Uniwersytetu Pozna\'nskiego 4, \mbox{}\newline\indent 61-614 Pozna\'n, Poland}
\email{daria.bugajewska@amu.edu.pl}

\author[Dem\'etrio Jr.]{E.~Dem\'etrio~Jr.}
\address[E.~Dem\'etrio~Jr.]{\mbox{}\newline\indent Departamento de Matem\'{a}tica \newline\indent
	Universidade Federal da Para\'{i}ba \newline\indent
	Jo\~ao Pessoa - PB \newline\indent
	58.051-900 (Brazil)}
\email{evandiodemetriojunior@gmail.com}

\author[Seoane]{J.B. Seoane-Sep\'ulveda}
\address[J.B. Seoane-Sep\'ulveda]{\mbox{}\newline\indent  Instituto de Matem\'atica Interdisciplinar (IMI) \newline\indent and \newline\indent Departamento de An\'alisis Matem\'atico y Matem\'atica Aplicada, \newline\indent Facultad de Ciencias Matem\'aticas, \newline\indent Universidad Complutense de Madrid, \newline\indent Plaza de Ciencias 3, \newline\indent 28040 Madrid, Spain}
\email{jseoane@ucm.es}

\subjclass[2020]{Primary 26A45, 15A03, 46B87; Secondary 46E15}
\keywords{Young variation, generalized bounded variation, closed subspace,
finite dimensionality, Levin--Milman theorem, lineability, spaceability,
Baire category}

\begin{document}
\raggedbottom

\begin{abstract}
In 1940, Levin\footnotemark[1] and Milman proved that a closed linear subspace of
$C[0,1]$ whose elements all have bounded Jordan variation must be
finite-dimensional.  We prove its analogue for variation in the sense of
Young and, more generally, for every finite-valued nondecreasing gauge
$\varphi:[0,\infty)\to[0,\infty)$ with
$\varphi(0)=0<\varphi(t)$ for $t>0$.  If $E$ is a closed linear subspace
of $C[0,1]$ and every $f\in E$ satisfies
$\Var_\varphi(\lambda_f f)<\infty$ at some scale $\lambda_f>0$, then $E$
is finite-dimensional.  No continuity, convexity, or doubling condition is
needed.  The proof combines a lower-semicontinuous regularization that
preserves the scaled class, Baire uniformization, Helly selection, and a
quantitative nested-peaks construction for nonhomogeneous gauges.
Consequently, for every infinite-dimensional closed subspace
$F\subset C[0,1]$, the set $F\cap\cV_\varphi([0,1])$ is a meagre
$F_\sigma$ subset of $F$.  For every Young
function, both the scaled and raw finite-variation families are
maximal dense-lineable but not spaceable.  The scaled family is itself a
dense vector subspace of Hamel dimension $\mathfrak c$, whereas without
local doubling the raw family need not itself be linear.
\end{abstract}

\maketitle
\footnotetext[1]{The 1940 article is indexed in Mathematical Reviews and in
part of the English-language literature under the spelling ``Levine.''  The
author is Boris Yakovlevich Levin; we use ``Levin'' throughout, except when
reproducing bibliographic metadata.}
\setcounter{footnote}{1}
\enlargethispage{2pt}

\section{Introduction}

How much linear structure can a regularity class support inside
$C[0,1]$?  The answer depends sharply on the kind of largeness under
consideration.  If $A$ is a subset of a topological vector space $X$, then
$A$ is called \emph{lineable} if $A\cup\{0\}$ contains an
infinite-dimensional vector subspace, \emph{spaceable} if
$A\cup\{0\}$ contains a closed infinite-dimensional vector subspace,
and \emph{maximal dense-lineable} if $A\cup\{0\}$ contains a dense
vector subspace having the same Hamel dimension as $X$.  These properties
are logically independent of Baire-category largeness: a proper dense
vector subspace of a Banach space may be meagre, whereas a residual set
need not contain even a two-dimensional vector subspace.  We refer to
\cite{BernalEtAlSurvey}*{Theorem~2.8 and pp.~114--115} and
\cite{AronEtAlLineability}*{pp.~16--21} for systematic accounts.

Bounded Jordan variation is a classical and particularly transparent test
case.  For a real-valued function $f$ on $J=[a,b]$, put
\[
 \Var(f;J)=\sup_{a=t_0<\cdots<t_n=b}
   \sum_{i=1}^n\abs{f(t_i)-f(t_{i-1})}.
\]
The continuous functions of finite Jordan variation form a maximal
dense-lineable subset of $C[0,1]$: they constitute a dense vector subspace
containing the polynomials, and the linearly independent family
$\{t\mapsto e^{\alpha t}:\alpha>0\}$ shows that their Hamel dimension is
$\mathfrak c$.  Nevertheless, Levin and Milman proved in 1940 that this
class contains no closed infinite-dimensional subspace for the uniform norm
\cite{LevineMilman}.  They also recorded that M.~G.~Krein had independently
obtained a proof by a different method
\cite{LevineMilman}*{p.~105, n.~3}.  Their theorem may therefore be viewed
as an early instance of the distinction between algebraic largeness and
closed linear structure
\cites{BernalEtAlSurvey,AronEtAlLineability,EnfloGurariySeoane}.

The topology in this statement is essential.  Continuous
bounded-variation functions form a Banach space under the intrinsic norm
\[
 \norm{f}_{BV}=\norm{f}_\infty+\Var(f;[0,1]),
\]
and special subclasses may contain closed infinite-dimensional subspaces
for an equivalent intrinsic $BV$ norm.  For example, Bernal-Gonz\'alez,
Fern\'andez-S\'anchez, Seoane-Sep\'ulveda, and Trutschnig constructed such
subspaces whose nonzero elements are, respectively, singular nowhere
monotone functions and absolutely continuous nowhere monotone functions
\cite{BernalFernandezSeoaneTrutschnig}.  The Levin--Milman theorem is
instead extrinsic: closedness is understood in the coarser uniform topology
inherited from $C[0,1]$.  Generalized variation classes will be viewed in
this same extrinsic sense throughout the paper.

The relevant extension is obtained by replacing the identity gauge with a
finite-valued nondecreasing function
\[
 \varphi:[0,\infty)\longrightarrow[0,\infty),
 \qquad
 \varphi(0)=0<\varphi(t)\quad(t>0),
\]
and defining
\[
 \Var_\varphi(f;J)
 =\sup_{a=t_0<\cdots<t_n=b}
   \sum_{i=1}^n
   \varphi\bigl(\abs{f(t_i)-f(t_{i-1})}\bigr).
\]
Two conventions must be distinguished.  The \emph{raw} class requires
$\Var_\varphi(f;J)<\infty$, whereas the modular, or \emph{scaled}, class
requires only
\[
 \Var_\varphi(\lambda f;J)<\infty
 \qquad\text{for some }\lambda>0.
\]
Jordan variation hides this distinction because it is homogeneous.  For a
general gauge, however, the scale cannot be moved through $\varphi$, and
this is the principal obstruction to extending the classical proof.

Our main result shows that the Levin--Milman rigidity phenomenon survives
this complete loss of homogeneity.

\begin{theorem}[Main theorem]\label{thm:intro-main}
Let $\varphi:[0,\infty)\to[0,\infty)$ be finite-valued and nondecreasing,
with
\[
 \varphi(0)=0
 \quad\text{and}\quad
 \varphi(t)>0\quad(t>0).
\]
Let $E$ be a closed linear subspace of $C[0,1]$.  Suppose that for every
$f\in E$ there exists $\lambda_f>0$ such that
\[
 \Var_\varphi(\lambda_f f;[0,1])<\infty.
\]
Then $E$ is finite-dimensional.
\end{theorem}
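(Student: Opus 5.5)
The plan is to argue by contradiction in three stages: use Baire category to obtain a uniform variation bound on a ball of $E$, and then contradict that bound by building, inside an infinite-dimensional $E$, a function of large norm-controlled variation (a nested-peaks construction).

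\emph{Regularization.} The raw functional $f\mapsto\Var_\varphi(f;[0,1])$ need not be lower semicontinuous on $C[0,1]$, because $\varphi$ may jump. I will replace $\varphi$ by its left-continuous regularization $\varphi_-(t)=\lim_{s\uparrow t}\varphi(s)$ for $t>0$, with $\varphi_-(0)=0$. This function is still nondecreasing and satisfies $\varphi_-\le\varphi$ and $\varphi_-(t)>0$ for $t>0$. For a fixed partition, $f\mapsto\sum\varphi_-(\abs{f(t_i)-f(t_{i-1})})$ is lower semicontinuous under uniform convergence, since $\varphi_-$ is lower semicontinuous. Hence $\Var_{\varphi_-}$, being a supremum of such sums, is lower semicontinuous too. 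Every $f\in E$ still satisfies $\Var_{\varphi_-}(\lambda_f f)\le\Var_\varphi(\lambda_f f)<\infty$. So we may assume from now on that $\varphi$ is left-continuous.

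\emph{Baire uniformization.} Set
\[
 A_{m,k}=\{f\in E:\Var_\varphi(f/m)\le k\}.
\]
By lower semicontinuity each $A_{m,k}$ is closed, and by the scaled hypothesis $E=\bigcup_{m,k}A_{m,k}$. Since $E$ is a Banach space, some $A_{m,k}$ contains a ball $B(f_0,r)$ of $E$. Subadditivity fails for a general gauge, so the symmetrization needs care. Since $\varphi$ is nondecreasing, $\varphi(\abs{a+b})\le\varphi(2\max(\abs a,\abs b))\le\varphi(2\abs a)+\varphi(2\abs b)$. Writing $g=(f_0+g)-f_0$ then gives
\[
 \Var_\varphi\bigl(g/(2m)\bigr)\le 2k
\]
for every $g\in E$ with $\norm g_\infty\le r$. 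Rescaling, we obtain $\delta>0$ and $K<\infty$ with $\Var_\varphi(\delta g)\le K$ for all $g$ in the unit ball of $E$.

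\emph{Contradiction via nested peaks (main obstacle).} Suppose $E$ is infinite-dimensional. The goal is to produce a unit-ball element $g$ for which some partition yields $\varphi(\delta\abs{\Delta g})\ge\varphi(c)$ on more than $K/\varphi(c)$ intervals, where $c>0$ is a fixed small threshold. This contradicts the uniform bound because $\varphi(c)>0$. Note that only a fixed amplitude level $c$ matters, so the missing homogeneity is avoided by working at a single height.

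To build $g$, I will use the classical fact that an infinite-dimensional subspace of $C[0,1]$ contains, for every $N$, an $N$-dimensional subspace whose elements can be chosen to attain norm at separated points. By a Gaussian-elimination / Auerbach-type argument, one obtains $g_1,\dots,g_N\in E$ and points $s_1<\dots<s_N$ such that $g_j(s_j)=1$, $\abs{g_j(s_i)}$ is small for $i\ne j$, and $\norm{g_j}_\infty\le C$.

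The difficulty is that the sum $\sum\pm g_j$ may have norm of order $N$. To handle this, I will iterate nested peaks. Helly selection applied to the uniformly bounded $\varphi$-variation family (the unit ball has equicontrolled oscillation counts at level $c$) shows that, passing to a subsequence, the functions in the unit ball of $E$ converge pointwise on a dense set. I will use this to show that the unit ball of $E$ is relatively compact in $C[0,1]$. The key step is to upgrade pointwise convergence to uniform convergence using the counting bound: a function with $\Var_\varphi(\delta h)\le K$ has at most $K/\varphi(c)$ oscillations of size greater than $c/\delta$, which gives an equicontinuity-type modulus at level $c/\delta$ for differences. Applying this to differences $h=(g_n-g_m)/2$ forces a uniform-Cauchy subsequence up to error $O(c/\delta)$. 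Since $c$ is arbitrary (with $K$ fixed), a diagonal argument yields a uniformly Cauchy subsequence. A compact unit ball then forces $\dim E<\infty$ by Riesz's lemma.

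I expect the quantitative step, that the counting bound together with pointwise convergence gives uniform closeness, to be the hardest part. I will first try to prove it by contradiction: a pair that is far apart in uniform norm but close on a fine grid must exhibit many oscillations, and I will extract these via nested peaks around the grid points.
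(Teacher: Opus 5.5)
Your regularization and Baire steps are correct, and your final Helly / compact unit ball / Riesz structure is also the paper's. Your symmetrization $g=(f_0+g)-f_0$ with $\varphi(\abs{a+b})\le\varphi(2\abs a)+\varphi(2\abs b)$ works just as well as the paper's $f_0\pm h$ trick.

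\textbf{The gap.} It sits exactly at the step you flag as hardest: upgrading pointwise convergence to uniform convergence. The mechanism you propose is false. A bound on the \emph{number} of oscillations of size $>c/\delta$ gives no equicontinuity, because those oscillations can be arbitrarily narrow and can move. Take $\varphi(t)=t$ and let $h_n$ be the tent of height $1$ supported on $[1/(n+1),1/n]$. Then $\Var(h_n)=2$ and $h_n\to0$ pointwise, yet $\norm{h_n-h_k}_\infty=1$ for all $n\ne k$, so no subsequence is uniformly Cauchy. A pair that is uniformly far apart but close on a fine grid needs to exhibit only \emph{one} spike, i.e.\ two large increments, not many. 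Your argument uses the variation bound only on single elements and on differences of two elements. It would therefore prove relative compactness of $\{h_n\}$ in $C[0,1]$, which is false. The linearity of $E$ has to be used in an essential way: many elements must be summed.

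\textbf{What is missing.} You need the amplification step, which is the paper's property (P) together with its nested-peaks proposition. Start from a normalized sequence $x_n\in E$ with $x_n\to0$ pointwise and $\Var_\varphi(\lambda x_n)\le M$. One must combine $m$ of them into $u=\sum_{j=1}^m\sigma_jx_{n_j}\in E$ with two properties:
\begin{itemize}
\item $\norm{u}_\infty\le1+(m-1)\eta$, rather than of order $m$ (the obstacle you already noticed with your Auerbach-type family);
\item $u$ has $2m$ increments, each of size at least $1-(2m-1)\eta$.
\end{itemize}
The construction goes as follows.
\begin{enumerate}
\item Use the counting bound to stabilize, along a subsequence, the finitely many dyadic intervals on which $\abs{x_n}\ge\eta$. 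This is where your counting bound legitimately enters. The active intervals then shrink along finitely many chains to points $\tau_1,\dots,\tau_p$.
\item Use pointwise convergence at the $\tau_j$ and at finitely many dyadic endpoints to choose each successive function so that its active set lies where all earlier chosen functions are $<\eps$, and so that it is itself $<\eps$ at all earlier endpoints.
\item Place the peaks in the disjoint ``annular'' intervals $I_{r,\omega}\setminus I_{r+1,\omega}$.
\end{enumerate}
Choose $m$ with $2m\varphi(\lambda/2)>M$, then $\eta<1/(5m-3)$, and normalize $u$ inside $\varphi$. This contradicts the uniform bound.

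Your ``iterate nested peaks'' remark points in this direction but supplies none of this content. Without it, or some substitute that genuinely sums many elements of $E$, the proof does not go through.
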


No convexity, continuity, or doubling condition is required.  The absence
of a regularity hypothesis on $\varphi$ is genuine: its lower
regularization is lower semicontinuous and determines exactly the same
scaled class; see \cref{lem:lsc-regularization}.  For $\varphi(t)=t$, the
theorem recovers the result of Levin and Milman.  For
$\varphi(t)=t^p$, it gives the corresponding assertion for every $p>0$.
In the range $0<p<1$ the conclusion is sharper still, since every
nonconstant continuous function has infinite $p$-variation.

The theorem has immediate algebraic and category consequences.  For every
Young function, both the scaled and raw finite-variation families are
maximal dense-lineable, but neither is spaceable.  The scaled family is
itself a dense vector subspace of Hamel dimension $\mathfrak c$; in
contrast, without local doubling the raw family need not be a vector
space.  Moreover, if $F$ is any infinite-dimensional closed subspace of
$C[0,1]$, then $F\cap\cV_\varphi([0,1])$ is a meagre $F_\sigma$ subset of
$F$.  A countable-intersection argument yields simultaneous divergence
for countably many gauges and, in particular, a residual subset of $F$
whose $p$-variation is infinite for every $0<p<\infty$.

The proof retains the gliding-hump geometry of Levin and Milman but
requires a different functional-analytic preparation.  After passing to
the lower-semicontinuous regularization, Baire's theorem turns the
element-dependent scales $\lambda_f$ into one common scale and one
variation bound on the uniform unit ball of $E$.  A generalized Helly
argument then produces pointwise-convergent subsequences.  Finally, a
quantitative nested-peaks construction converts a normalized
pointwise-null sequence into arbitrarily many separated excursions.  The
normalization is kept inside $\varphi$, rather than extracted from it, and
hence remains valid for nonhomogeneous gauges.  Besides extending the
theorem, our reconstruction makes several steps compressed in the 1940
argument explicit---notably the codimension-one reduction, the finite
reindexing, and the cross-term bookkeeping---and replaces the final
homogeneous normalization by an estimate valid for general gauges.  A
detailed comparison is given in \cref{rem:LM-comparison}.

 Gurariy's work provides a closely related
motivation.  With the convention
that all subspaces under consideration are closed, his Theorem~10 states
that a subspace of $C[0,1]$ all of whose elements are differentiable on the
whole closed interval must be finite-dimensional
\cite{Gurariy1966}*{English transl., Theorem~10, p.~501}.  The endpoints
are essential: suitable M\"untz spaces give infinite-dimensional closed
subspaces consisting of functions analytic on $(0,1)$.  More precisely,
Gurariy's Theorem~11 asserts that every infinite-dimensional closed
subspace whose elements are differentiable on $(0,1)$ contains, for each
$\eps>0$, a closed subspace whose Banach--Mazur distance from $c_0$ is less
than $1+\eps$.  Consequently, every reflexive closed subspace of $C[0,1]$
whose elements are differentiable on $(0,1)$ is finite-dimensional
\cite{Gurariy1966}*{English transl., Theorem~11 and Corollary, p.~501}.
The English translation contains an evident typographical error in the
hypothesis of Theorem~11: it prints ``finite-dimensional'', whereas both
the conclusion involving $c_0$ and the ensuing corollary show that
``infinite-dimensional'' is intended.  Bounded variation and
differentiability on the closed interval thus furnish two early examples
of linear regularity classes that are not spaceable in $C[0,1]$.

At the irregular end the picture is very different.  The classical
theorems of Banach and Mazurkiewicz show that nowhere differentiable
functions form a residual subset of $C[0,1]$
\cites{Banach1931,Mazurkiewicz}.  Gurariy constructed an
infinite-dimensional linear manifold whose nonzero elements are nowhere
differentiable \cite{Gurariy1991}.  Fonf, Gurariy, and Kadets constructed
a closed subspace isomorphic to $\ell_1$ and a single set of full Lebesgue
measure on which no nonzero member has a finite one-sided derivative; their
construction circulated as a 1990 preprint and was published in 1999
\cite{FonfGurariyKadets}*{pp.~13--16}.  Rodr\'iguez-Piazza proved the
isometric universality statement that every separable Banach space admits
a linear isometric realization in $C[0,1]$ whose nonzero elements are
nowhere differentiable \cite{RodriguezPiazza}, and Hencl obtained a
universal isometric embedding with nowhere approximately differentiable
and nowhere H\"older nonzero images \cite{Hencl}.

The pointwise one-sided-derivative conclusion was subsequently sharpened
by Girgensohn, who constructed a closed infinite-dimensional subspace
whose nonzero elements have no finite one-sided derivative at any point
\cite{Girgensohn}; Berezhnoi later gave another construction with the same
everywhere pointwise property \cite{Berezhnoi}.  Bobok went further and
proved the spaceability of the Besicovitch functions, for which no
one-sided derivative exists even as an infinite value
\cite{BobokBesicovitch}.  More recently, Bobok and Dud\'ak produced an
isometric copy of $c$ consisting, apart from zero, of Besicovitch functions
\cite{BobokDudak}; in a recent preprint, Dud\'ak extended this to an
isometric copy of $C(K)$ for every countable compact space $K$
\cite{Dudak2026}.  These results emphasize the contrast between the
regularity classes considered above, which exhibit closed-subspace rigidity
in the uniform topology, and extreme local irregularity, which is compatible
with prescribed Banach-space structure.

There is also a form of unavoidable oscillation inside every
infinite-dimensional closed subspace of $C[0,1]$.  Enflo, Gurariy, and
Seoane-Sep\'ulveda proved that each such subspace $X$ contains a closed
infinite-dimensional subspace $Y$ and distinct points $(t_k)$ such that
\[
 y(t_k)=0\qquad(k\in\N,\ y\in Y)
\]
\cite{EnfloGurariySeoane}*{Corollaries~3.7 and~3.8}.  Their theorem and
ours are geometrically related but logically independent.  Their
construction converts oscillation into a common zero sequence, whereas
our nested-peaks argument amplifies a pointwise-null sequence into
arbitrarily large generalized variation.

The historical regularity scale begins with Jordan variation and its power
and modular extensions.  Wiener initiated the theory of bounded power
variation, with his 1924 article devoted to quadratic variation
\cite{Wiener}.  Young introduced the general functional now called Young
or $\varphi$-variation and developed the corresponding integration theory
\cites{Young1937,Young1938}.  Musielak and Orlicz established a systematic
modular theory of generalized variation \cite{MusielakOrlicz}, continued
in \cite{LesniewiczOrlicz}; see also
\cites{AppellBanasMerentes,BugajewskaReinwand,ReinwandKasprzak} for modern
treatments of the resulting spaces and operators.  Spaces of bounded
$p$-variation have also been studied with their intrinsic Banach-space
structure \cite{Kisliakov1984}.  Compactness in
intrinsic variation-space norms has been studied by Ciemnoczo{\l}owski and
Orlicz and, more recently, by Gulgowski, whose results explicitly include
Young $\Phi$-variation \cites{CiemnoczolowskiOrlicz,Gulgowski}.  Those
intrinsic compactness criteria complement, but do not imply, the
uniform-topology rigidity proved here.

The paper is organized as follows.  In \cref{sec:preliminaries} we
introduce the variation classes, establish the regularization principle,
and prove the Baire and Helly tools.  \Cref{sec:nested-peaks} contains the
quantitative nested-peaks construction, and the main theorem is proved in
\cref{sec:main-theorem}.  Algebraic and Baire-category consequences are
derived in \cref{sec:consequences}.  The final section separates the scaled
and raw conventions by examples and discusses the sharpness of the
hypotheses.

\section{Variation gauges and two compactness tools}
\label{sec:preliminaries}

Throughout the paper, all functions are real-valued.  The interval
$[0,1]$ can be replaced by any nondegenerate compact interval by an affine
change of variables.

\begin{definition}\label{def:gauge}
An \emph{admissible variation gauge} is a finite-valued, nondecreasing
function
$\varphi:[0,\infty)\to[0,\infty)$ such that
\[
 \varphi(0)=0<\varphi(t)\qquad(t>0).
\]
It is called \emph{regular} if it is lower semicontinuous.
For a function $f:J=[a,b]\to\R$, define
\[
 \Var_\varphi(f;J)
 :=\sup_{P\in\cP(J)}
 \sum_{i=1}^{n(P)}
 \varphi\bigl(\abs{f(t_i)-f(t_{i-1})}\bigr),
\]
where $P=(a=t_0<\cdots<t_{n(P)}=b)$ ranges over the finite
partitions of $J$.  We put
\[
 \cV_\varphi(J)
 :=\bigl\{f:J\to\R:\
       \Var_\varphi(\lambda f;J)<\infty
       \text{ for some }\lambda>0\bigr\}.
\]
The corresponding \emph{raw variation class} is
\[
 \cR_\varphi(J)
 :=\bigl\{f:J\to\R:\Var_\varphi(f;J)<\infty\bigr\}.
\]
\end{definition}

When the interval is $\I$, we abbreviate
$\Var_\varphi(f):=\Var_\varphi(f;\I)$.  We also write
$\Var_p(f;J):=\Var_{\varphi_p}(f;J)$ for
$\varphi_p(t)=t^p$ and $p>0$, and set
\[
\osc(f;J):=\sup_{s,t\in J}\abs{f(t)-f(s)}.
\]

The next observation allows us to use closed variation sublevels without
imposing any regularity on an admissible gauge.

\begin{lemma}[Lower-semicontinuous regularization]
\label{lem:lsc-regularization}
Let $\varphi$ be an admissible variation gauge and define
\[
 \varphi_-(0)=0,
 \qquad
 \varphi_-(t)=\sup_{0\le s<t}\varphi(s)\quad(t>0).
\]
Then $\varphi_-$ is a regular admissible variation gauge and
\begin{equation}\label{eq:regularized-class}
 \cV_{\varphi_-}(J)=\cV_\varphi(J)
\end{equation}
for every compact interval $J$.  More precisely, for every $c>1$,
$\lambda>0$, and function $f:J\to\R$,
\begin{align}
 \Var_{\varphi_-}(\lambda f;J)
 &\le \Var_\varphi(\lambda f;J),\label{eq:regularization-first}\\
 \Var_\varphi((\lambda/c)f;J)
 &\le \Var_{\varphi_-}(\lambda f;J).
 \label{eq:regularization-second}
\end{align}
\end{lemma}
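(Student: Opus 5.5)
The plan is to verify the gauge properties of $\varphi_-$ directly, then derive the two inequalities pointwise from comparisons between $\varphi_-$ and $\varphi$, and finally read off \eqref{eq:regularized-class}. Since $\varphi$ is nondecreasing, $\varphi_-(t)=\sup_{0\le s<t}\varphi(s)=\lim_{s\uparrow t}\varphi(s)$ for $t>0$, so $\varphi_-$ is the left-continuous version of $\varphi$.

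\textbf{Step 1: admissibility and regularity.} We check the following in order.
\begin{enumerate}[label=(\roman*)]
\item $\varphi_-$ is finite-valued, since $\varphi_-(t)\le\varphi(t)<\infty$ by monotonicity of $\varphi$.
\item $\varphi_-$ is nondecreasing, since the supremum is taken over a larger set as $t$ grows.
\item $\varphi_-(t)>0$ for $t>0$, since $\varphi_-(t)\ge\varphi(t/2)>0$.
\item $\varphi_-(0)=0$ holds by definition.
\end{enumerate}
For lower semicontinuity, a nondecreasing function is lower semicontinuous exactly when it is left-continuous. Left-continuity at $t>0$ follows because $\varphi_-$ is a supremum over the open set $[0,t)$: if $\varphi_-(t)>\alpha$, choose $s<t$ with $\varphi(s)>\alpha$. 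Then $\varphi_-(u)>\alpha$ for all $u\in(s,t]$, and by monotonicity for all $u>s$. At $t=0$ lower semicontinuity is trivial, since $\varphi_-\ge0=\varphi_-(0)$.

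\textbf{Step 2: the pointwise comparisons.} Two inequalities are needed.
\begin{itemize}
\item $\varphi_-(u)\le\varphi(u)$ for all $u\ge0$, by monotonicity of $\varphi$.
\item $\varphi(u/c)\le\varphi_-(u)$ for all $u\ge0$ and $c>1$. For $u>0$ this holds because $u/c<u$ lies in the index set of the supremum; for $u=0$ both sides vanish.
\end{itemize}
Apply these with $u=\lambda\abs{f(t_i)-f(t_{i-1})}$, sum over a partition $P$, and take suprema over $\cP(J)$. This gives \eqref{eq:regularization-first} and \eqref{eq:regularization-second}, using $\abs{(\lambda/c)f(t_i)-(\lambda/c)f(t_{i-1})}=\lambda\abs{f(t_i)-f(t_{i-1})}/c$.

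\textbf{Step 3: equality of classes.} If $f\in\cV_\varphi(J)$ with scale $\lambda$, then \eqref{eq:regularization-first} shows $f\in\cV_{\varphi_-}(J)$ with the same scale. Conversely, if $\Var_{\varphi_-}(\lambda f;J)<\infty$, then \eqref{eq:regularization-second} with $c=2$ gives $\Var_\varphi((\lambda/2)f;J)<\infty$, so $f\in\cV_\varphi(J)$ with scale $\lambda/2$.

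There is no real obstacle here. The only point needing care is lower semicontinuity, which rests on the observation that the supremum over the half-open set $[0,t)$ forces left-continuity. The loss of the factor $c$ in \eqref{eq:regularization-second} is unavoidable for gauges with jumps, and it is harmless precisely because the class $\cV_\varphi$ is defined with a free scale.
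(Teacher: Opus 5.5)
Your proposal is correct and follows the paper's proof essentially step for step. It uses the same admissibility checks (including $\varphi_-(t)\ge\varphi(t/2)$), the same pointwise comparisons $\varphi_-\le\varphi$ and $\varphi(\cdot/c)\le\varphi_-$ applied to increments, and a fixed $c$ (you take $c=2$) for the reverse inclusion; the only cosmetic difference is that you phrase lower semicontinuity through open superlevel sets instead of the paper's sequential $\liminf$ argument, which is the same idea.
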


\begin{proof}
The function $\varphi_-$ is finite and nondecreasing, and it is positive
on $(0,\infty)$ because
$\varphi_-(t)\ge\varphi(t/2)>0$ for $t>0$.  To verify lower
semicontinuity, let $t_n\to t>0$.  For every $s<t$, one has $t_n>s$
eventually, and hence
\[
 \liminf_{n\to\infty}\varphi_-(t_n)\ge\varphi(s).
\]
Taking the supremum over $s<t$ gives
$\liminf_n\varphi_-(t_n)\ge\varphi_-(t)$.  Lower semicontinuity at zero
follows from nonnegativity.

For all $t\ge0$ and $c>1$,
\[
 \varphi_-(t)\le\varphi(t),
 \qquad
 \varphi(t/c)\le\varphi_-(t),
\]
where the second inequality is immediate for $t>0$ from $t/c<t$ and is
also true at $t=0$.  Applying these two pointwise inequalities to the
increments of an arbitrary partition proves
\eqref{eq:regularization-first}--\eqref{eq:regularization-second}.
The equality of the scaled classes follows: the first inequality gives
$\cV_\varphi(J)\subset\cV_{\varphi_-}(J)$, while the second, with any
fixed $c>1$, gives the reverse inclusion.
\end{proof}

\begin{remark}\label{rem:regularization-raw}
The equality in \cref{lem:lsc-regularization} concerns the scaled classes.
In general, the corresponding raw classes need not coincide; one only has
\[
 \cR_\varphi(J)\subset\cR_{\varphi_-}(J)
 \subset\cV_\varphi(J).
\]
\end{remark}

If $\varphi$ is convex, it is a Young function in the convention used
here.  Indeed, a finite convex function is continuous on $(0,\infty)$,
while $\varphi(t)\le t\varphi(1)$ for $0\le t\le1$ gives continuity at
zero.  Moreover, if $0<s<t$, then
$\varphi(t)\ge(t/s)\varphi(s)>\varphi(s)$; the same inequality with $s$
fixed also proves unboundedness.  We shall write
\[
 \mathrm{YBV}_\varphi(\I)
 :=C[0,1]\cap\cV_\varphi(\I)
\]
for its continuous scaled variation class.  The corresponding bounded
Young-variation space is a Banach space under the norm
\[
 \norm{f}_{\mathrm{YBV}_\varphi}
 =\norm{f}_\infty+\inf\left\{c>0:
       \Var_\varphi(f/c;\I)\le1\right\};
\]
see \cites{MusielakOrlicz,ReinwandKasprzak}.  The local doubling condition
at zero
\[
 \limsup_{t\downarrow0}\frac{\varphi(2t)}{\varphi(t)}<\infty,
\]
which we call the local $\delta_2$ condition, identifies
this scaled class with the generally smaller raw class
$C[0,1]\cap\cR_\varphi(\I)$.  We deliberately work with the
scaled class in \cref{def:gauge}; it is the natural Young modular class
and does not require any doubling condition.

For completeness, here is the comparison behind that assertion.  The
displayed condition, monotonicity, and positivity away from zero imply that
for every $T>0$ there is $D_T<\infty$ such that
\begin{equation}\label{eq:delta2-compact}
 \varphi(2t)\le D_T\varphi(t)\qquad(0\le t\le T).
\end{equation}
Indeed, choose $0<\delta\le T$ and $D_0<\infty$ so that
$\varphi(2t)\le D_0\varphi(t)$ for $0<t\le\delta$.  For
$\delta\le t\le T$, monotonicity and positivity give
\[
 \frac{\varphi(2t)}{\varphi(t)}
 \le\frac{\varphi(2T)}{\varphi(\delta)}.
\]
Thus one may take
$D_T=\max\{D_0,\varphi(2T)/\varphi(\delta)\}$.
If $\Var_\varphi(\lambda f)<\infty$, choose an integer $m\ge0$ such that
$2^m\lambda\ge1$, and then take
\[
 T>2^m\lambda\osc(f;\I).
\]
For every increment, monotonicity and $m$ iterations of
\eqref{eq:delta2-compact} give
\[
 \varphi(\abs{\Delta f})
 \le \varphi(2^m\lambda\abs{\Delta f})
 \le D_T^m\varphi(\lambda\abs{\Delta f}).
\]
After summing and taking suprema,
$\Var_\varphi(f)\le D_T^m\Var_\varphi(\lambda f)<\infty$; the reverse
inclusion follows by taking $\lambda=1$.

For later use, if $\varphi$ is lower semicontinuous, the mapping
\[
 C[0,1]\ni f\longmapsto \Var_\varphi(\lambda f;\I)
 \in[0,\infty]
\]
is lower semicontinuous for every fixed $\lambda>0$.  Indeed, for each
finite partition the corresponding sum is lower semicontinuous in the
uniform norm, and an arbitrary supremum of lower semicontinuous functions
is lower semicontinuous.  Equivalently, if $f_n\to f$ uniformly, then
\begin{equation}\label{eq:lsc}
 \Var_\varphi(\lambda f;\I)
 \le \liminf_{n\to\infty}
       \Var_\varphi(\lambda f_n;\I).
\end{equation}

The first tool turns pointwise membership in $\cV_\varphi$ into a uniform
estimate on a Banach-space ball.

\begin{lemma}[Baire uniformization]\label{lem:baire}
Let $\varphi$ be a regular admissible variation gauge.  Let
$E$ be a closed linear subspace of $C[0,1]$ and suppose
$E\subset\cV_\varphi(\I)$.  Then there exist numbers $\lambda>0$ and
$M<\infty$ such that
\begin{equation}\label{eq:uniform-modular}
 \norm{f}_\infty\le1,\quad f\in E
 \quad\Longrightarrow\quad
 \Var_\varphi(\lambda f;\I)\le M.
\end{equation}
\end{lemma}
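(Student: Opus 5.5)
We argue by the Baire category theorem applied to the Banach space $E$ (closed in $C[0,1]$, hence complete). For $n\in\N$ define
\[
 A_n:=\bigl\{f\in E:\ \Var_\varphi(f/n;\I)\le n\bigr\}.
\]
Since $\varphi$ is regular, \eqref{eq:lsc} with $\lambda=1/n$ shows that $f\mapsto\Var_\varphi(f/n;\I)$ is lower semicontinuous on $C[0,1]$. Hence each $A_n$ is closed in $E$. The sets also cover $E$. Given $f\in E$, pick $\lambda_f>0$ with $\Var_\varphi(\lambda_f f)<\infty$ and choose $n\ge\max\{1/\lambda_f,\Var_\varphi(\lambda_f f)\}$. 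Then $1/n\le\lambda_f$. Because $\varphi$ is nondecreasing, $\varphi(\abs{\Delta f}/n)\le\varphi(\lambda_f\abs{\Delta f})$ for every increment, so $\Var_\varphi(f/n)\le\Var_\varphi(\lambda_f f)\le n$. By Baire, some $A_N$ has nonempty interior: there are $f_0\in E$ and $r>0$ with $f_0+g\in A_N$ whenever $g\in E$ and $\norm{g}_\infty\le r$.

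The main step, and the only place where the missing homogeneity matters, is to move from a ball around $f_0$ to a ball around $0$ without pulling scalars out of $\varphi$. Take $g\in E$ with $\norm{g}_\infty\le r$ and write $g=(f_0+g)-f_0$, where both $f_0+g$ and $f_0$ lie in $A_N$. For any partition and each increment, monotonicity gives
\[
 \varphi\Bigl(\tfrac{\abs{\Delta g}}{2N}\Bigr)
 \le\varphi\Bigl(\max\Bigl\{\tfrac{\abs{\Delta(f_0+g)}}{N},\tfrac{\abs{\Delta f_0}}{N}\Bigr\}\Bigr)
 \le\varphi\Bigl(\tfrac{\abs{\Delta(f_0+g)}}{N}\Bigr)+\varphi\Bigl(\tfrac{\abs{\Delta f_0}}{N}\Bigr).
\]
The first inequality uses $\tfrac12(a+b)\le\max\{a,b\}$, and the second uses that $\varphi\ge0$, so a maximum of two values is bounded by their sum. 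Summing over the partition and taking suprema yields
\[
 \Var_\varphi\bigl(g/(2N)\bigr)\le 2N.
\]
This bound only requires monotonicity and nonnegativity; no convexity or doubling is used.

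Finally we rescale the domain. If $f\in E$ and $\norm{f}_\infty\le1$, then $g:=rf$ lies in $E$ with $\norm{g}_\infty\le r$. The previous paragraph gives $\Var_\varphi(rf/(2N))\le 2N$. So \eqref{eq:uniform-modular} holds with $\lambda=r/(2N)$ and $M=2N$.
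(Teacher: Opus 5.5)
Your proof is correct and follows essentially the paper's argument. You use the same closed sets $A_N$ (closed by \eqref{eq:lsc}), the same covering and Baire step, and the same increment estimate via $\tfrac12(a+b)\le\max\{a,b\}$, arriving at the identical constants $\lambda=r/(2N)$ and $M=2N$. The only cosmetic difference is the pair of points you compare. You compare $f_0+g$ with $f_0$ on a closed ball, which is legitimate after shrinking $r$ or because $A_N$ is closed. The paper instead compares $f_0\pm(r/2)f$ inside the open ball.
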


\begin{proof}
For $N\in\N$, let
\[
 F_N=\left\{f\in E:
       \Var_\varphi(f/N;\I)\le N\right\}.
\]
Each $F_N$ is closed in $E$ by \eqref{eq:lsc}.  Moreover,
$E=\bigcup_{N\ge1}F_N$.  To see this, take $f\in E$ and choose
$\mu>0$ with $\Var_\varphi(\mu f)<\infty$.  If $N$ is large enough that
$N^{-1}\le\mu$ and $N\ge\Var_\varphi(\mu f)$, monotonicity of
$\varphi$ gives $f\in F_N$.

Since $E$ is a Banach space, Baire's theorem yields $N\in\N$,
$f_0\in E$, and $r>0$ such that
\[
 f_0+rB_E^\circ\subset F_N,
\]
where $B_E^\circ$ is the open unit ball.  Let $f\in E$ with
$\norm{f}_\infty\le1$ and put $h=(r/2)f$.  Then
$u=f_0+h$ and $v=f_0-h$ belong to $F_N$.  Since
$\Delta h=(\Delta u-\Delta v)/2$, the elementary inequality
$(\abs a+\abs b)/2\le\max\{\abs a,\abs b\}$ and the monotonicity of
$\varphi$ give
\[
 \varphi\left(\frac{\abs{\Delta h}}{N}\right)
 \le
 \varphi\left(\frac{\abs{\Delta u}}{N}\right)
 +\varphi\left(\frac{\abs{\Delta v}}{N}\right).
\]
Summing and taking the supremum over all partitions, we obtain
\[
 \Var_\varphi\left(\frac{r}{2N}f;\I\right)
 =\Var_\varphi\left(\frac{h}{N};\I\right)
 \le2N.
\]
Thus \eqref{eq:uniform-modular} holds with
$\lambda=r/(2N)$ and $M=2N$.
\end{proof}

The second tool is a Helly-type selection statement adapted to our
setting; compare the generalized-variation forms in
\cite{MusielakOrlicz}.  We include the argument because it also shows why
no convexity assumption is involved.

\begin{lemma}[Helly selection]\label{lem:helly}
Let $\varphi$ be a regular admissible variation gauge, and
let $(f_n)$ be uniformly bounded in $C[0,1]$.  If there exist
$\lambda>0$ and $M<\infty$ such that
\[
 \Var_\varphi(\lambda f_n;\I)\le M
 \qquad(n\in\N),
\]
then $(f_n)$ has a subsequence converging pointwise on $[0,1]$ to a
function $f\in\cV_\varphi(\I)$.  The limit satisfies
$\Var_\varphi(\lambda f;\I)\le M$.
\end{lemma}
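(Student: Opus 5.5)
The plan is to follow the classical Helly route, using the variation bound to control oscillation in place of monotone decomposition. Write $g_n=\lambda f_n$, so that $\Var_\varphi(g_n)\le M$ and $\norm{g_n}_\infty\le K$ uniformly.

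\textbf{Step 1: a jump-counting consequence of the bound.} For $\eps>0$ and any $n$, the number of disjoint intervals $[s,t]\subset\I$ on which $\abs{g_n(t)-g_n(s)}\ge\eps$ is at most $M/\varphi(\eps)$. Indeed, such intervals can be inserted into a single partition, and $\varphi(\eps)>0$ by admissibility. This is the substitute for monotonicity.

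\textbf{Step 2: selection.} By a diagonal argument, first extract a subsequence converging on a countable dense set $D$ containing $0,1$. The extension to all of $[0,1]$ is the main obstacle, since we cannot write $g_n$ as a difference of monotone functions. I would try the standard device of a Helly-type auxiliary function. Fix a rational $\eps$. For each $n$, define $N_n^\eps(x)$ as the maximal number of successive points $0\le x_0<x_1<\cdots<x_k\le x$ with $\abs{g_n(x_i)-g_n(x_{i-1})}\ge\eps$. Each $N_n^\eps$ is nondecreasing, integer-valued, and bounded by $M/\varphi(\eps)$ by Step 1. The classical Helly theorem for monotone functions then lets us pass, diagonally over rational $\eps$, to a subsequence along which every $N^\eps_n$ converges everywhere to a nondecreasing limit $N^\eps$ with finitely many jumps.

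The claim is that off the countable jump set $S=\bigcup_\eps\{\text{discontinuities of }N^\eps\}$, together with $D$, the sequence $g_n(x)$ is Cauchy. Take $x\notin S$ and $\eps$. Pick $d\in D$ near $x$ with $N^\eps$ constant on $[d,x]$, hence $N^\eps_n(x)=N^\eps_n(d)$ for large $n$. I expect that the oscillation of $g_n$ between $d$ and $x$ is then less than about $2\eps$: otherwise an extra $\eps$-increment could be appended, forcing $N_n^\eps$ to increase. This needs a careful greedy definition of $N_n^\eps$, for instance with the greedy chain, so that appending after the last chain point is legitimate. Given this, $\limsup_{m,n}\abs{g_n(x)-g_m(x)}\le4\eps$. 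Finally, the countable set $S$ is handled by one more diagonal extraction.

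\textbf{Step 3: the bound for the limit.} Put $f=\lim f_n$ pointwise. For any fixed partition, each increment $\lambda\abs{f(t_i)-f(t_{i-1})}$ is the limit of the corresponding increments of $g_n$. Lower semicontinuity of $\varphi$ (regularity) gives
\[
\sum\varphi(\lambda\abs{\Delta f})\le\liminf_n\sum\varphi(\abs{\Delta g_n})\le M.
\]
Taking the supremum over partitions yields $\Var_\varphi(\lambda f)\le M$, and hence $f\in\cV_\varphi(\I)$. No convexity enters anywhere, only positivity of $\varphi$ on $(0,\infty)$ in Step 1 and lower semicontinuity in Step 3.
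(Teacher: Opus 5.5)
Your proof is correct, but it takes a different route from the paper's.

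The paper applies classical Helly selection once, to the single nondecreasing function $v_n(t)=\Var_\varphi(\lambda f_n;[0,t])\in[0,M]$. It then uses superadditivity in the form $\varphi(\lambda\abs{f_n(t)-f_n(s)})\le v_n(t)-v_n(s)$. At a continuity point $t$ of the limit $v$, a nearby $s\in D$ makes the right side smaller than $\varphi(\lambda\rho/3)$ for large $n$, which forces $\abs{f_n(t)-f_n(s)}<\rho/3$.

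You replace $v_n$ by the countable family of integer-valued counters $N_n^\eps$, $\eps\in\mathbb Q_{>0}$. This costs an extra diagonal extraction, but integrality makes the key step clean, since $N_n^\eps(d)$ and $N_n^\eps(x)$ must eventually coincide.

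The greedy chain you worry about is not needed; your maximal-chain definition already suffices. Suppose $d<x$, $k=N_n^\eps(d)=N_n^\eps(x)$, and $x_0<\cdots<x_k\le d$ is any maximal chain for $d$. Then every $y\in[d,x]$ with $y>x_k$ satisfies $\abs{g_n(y)-g_n(x_k)}<\eps$, because otherwise appending $y$ would give a chain of length $k+1$ in $[0,x]$. Hence $\abs{g_n(y)-g_n(d)}<2\eps$ on $[d,x]$, and the case $d>x$ is symmetric.

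What your route buys is generality. The variation bound enters only through Step 1, so you have really proved pointwise selection for every uniformly bounded sequence of real functions satisfying $\sup_n N_n^\eps(1)<\infty$ for each $\eps>0$. No gauge and no continuity of the $f_n$ are involved; lower semicontinuity of $\varphi$ enters only in Step 3, exactly as in the paper. The paper's route is shorter and uses the additivity of $\Var_\varphi$ over adjacent intervals directly, so a single monotone auxiliary function suffices.
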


\begin{proof}
For $t\in[0,1]$, set
\[
 v_n(t)=\Var_\varphi(\lambda f_n;[0,t]).
\]
The functions $v_n$ are nondecreasing and take values in $[0,M]$: every
partition of $[0,t]$ can be extended to one of $[0,1]$, and all added
terms are nonnegative.
Classical Helly selection gives a subsequence, not relabeled, and a
nondecreasing function $v$ such that $v_n(t)\to v(t)$ at every continuity
point of $v$.  Since the continuity set of the monotone function $v$ is
dense in $[0,1]$, choose a countable set $D$ contained in it and dense in
$[0,1]$.  By a diagonal argument, pass to a further subsequence for which
$f_n(t)$ converges at every $t\in D$.

If $s<t$, concatenating partitions of $[0,s]$ and $[s,t]$ shows the
superadditivity relation
\[
 v_n(t)\ge v_n(s)+\Var_\varphi(\lambda f_n;[s,t]).
\]
In particular,
\begin{equation}\label{eq:local-modular-control}
 \varphi\bigl(\lambda\abs{f_n(t)-f_n(s)}\bigr)
 \le \Var_\varphi(\lambda f_n;[s,t])
 \le v_n(t)-v_n(s).
\end{equation}
Let $t$ be a continuity point of $v$, and fix $\rho>0$.  Choose
$s\in D$ on either available side of $t$, sufficiently close that
\[
 \abs{v(t)-v(s)}<\tfrac12\varphi(\lambda\rho/3).
\]
Since $v_n(t)\to v(t)$ and $v_n(s)\to v(s)$, for all sufficiently large
$n$ the rightmost quantity in \eqref{eq:local-modular-control}, with the
endpoints ordered, is smaller than $\varphi(\lambda\rho/3)$.  Monotonicity
then implies $\abs{f_n(t)-f_n(s)}<\rho/3$: otherwise its variation term
would be at least $\varphi(\lambda\rho/3)$.  Since $(f_n(s))$ converges,
the triangle inequality gives $\abs{f_n(t)-f_k(t)}<\rho$ for all large
$n,k$.  Thus $f_n(t)$
converges at every continuity point of $v$.  The set of discontinuities
of $v$ is countable, so one final diagonal extraction gives convergence
at those points as well.

Call the pointwise limit $f$, and fix a partition
$P=(0=t_0<\cdots<t_k=1)$.  Lower semicontinuity of $\varphi$ and the
finite form of Fatou's lemma give
\begin{align*}
 \sum_{i=1}^k
 \varphi\bigl(\lambda\abs{f(t_i)-f(t_{i-1})}\bigr)
 &\le \sum_{i=1}^k\liminf_{n\to\infty}
 \varphi\bigl(\lambda\abs{f_n(t_i)-f_n(t_{i-1})}\bigr)\\
 &\le \liminf_{n\to\infty}
 \sum_{i=1}^k
 \varphi\bigl(\lambda\abs{f_n(t_i)-f_n(t_{i-1})}\bigr)
 \le M.
\end{align*}
Taking the supremum over $P$ gives
$\Var_\varphi(\lambda f;\I)\le M$.
\end{proof}

\section{A quantitative nested-peaks construction}
\label{sec:nested-peaks}

This section isolates the combinatorial core of the proof.  It follows the
idea of Levin and Milman, but the estimates are arranged so that they
remain valid for a nonhomogeneous variation functional.

For $r=0,1,2,\ldots$, let
\[
 \cD_r=\{k2^{-r}:k\in\mathbb Z,\ 0\le k\le2^r\}
\]
be the $r$th dyadic partition, and set
\[
 \cQ_r=\bigl\{[k2^{-r},(k+1)2^{-r}]:k=0,\ldots,2^r-1\bigr\}.
\]
Fix $0<\eps<\eta<1$.  If $x\in C[0,1]$ is smaller
than $\eps$ in modulus at every point of $\cD_r$, a dyadic interval
$I\in\cQ_r$ is called \emph{$\eta$-active for $x$} if
\[
 \max_{t\in I}\abs{x(t)}\ge\eta.
\]

\begin{lemma}[Bound on active dyadic intervals]\label{lem:active-count}
Let $\varphi$ be an admissible variation gauge.  Suppose that
$\abs{x(t)}<\eps$ for $t\in\cD_r$ and
$\Var_\varphi(\lambda x;\I)\le M$.  If $s_r(x)$ is the number of
$\eta$-active intervals in $\cQ_r$, then
\begin{equation}\label{eq:active-count}
 s_r(x)\le
 \frac{M}{2\varphi(\lambda(\eta-\eps))}.
\end{equation}
\end{lemma}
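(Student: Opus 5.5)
The plan is to build one partition of $[0,1]$ that collects two large increments from each active interval. Each such increment will contribute at least $\varphi(\lambda(\eta-\eps))$ to the sum defining $\Var_\varphi(\lambda x;\I)$. Comparing the total with the bound $M$ then gives \eqref{eq:active-count}.

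\textbf{Two large increments per active interval.} Let $I=[k2^{-r},(k+1)2^{-r}]\in\cQ_r$ be $\eta$-active for $x$. By continuity, pick $\tau_I\in I$ with $\abs{x(\tau_I)}=\max_{t\in I}\abs{x(t)}\ge\eta$. The endpoints $a_I=k2^{-r}$ and $b_I=(k+1)2^{-r}$ lie in $\cD_r$, so $\abs{x(a_I)}<\eps$ and $\abs{x(b_I)}<\eps$. This forces $\tau_I$ to be an interior point of $I$, with
\[
 \abs{x(\tau_I)-x(a_I)}>\eta-\eps,
 \qquad
 \abs{x(b_I)-x(\tau_I)}>\eta-\eps.
\]
Since $\varphi$ is nondecreasing, each of the two terms $\varphi(\lambda\abs{\Delta x})$ is at least $\varphi(\lambda(\eta-\eps))$.

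\textbf{Assembling the partition.} Take the partition $P$ consisting of all points of $\cD_r$ together with the points $\tau_I$, one for each active $I$. Distinct intervals of $\cQ_r$ have disjoint interiors, so the points $\tau_I$ are distinct and separate from $\cD_r$. Hence $P$ is a genuine finite partition of $[0,1]$. Its subintervals are of two kinds: the inactive dyadic intervals, and for each active $I$ the two pieces $[a_I,\tau_I]$ and $[\tau_I,b_I]$. All terms are nonnegative, so dropping those from inactive intervals gives
\[
 M\ge\Var_\varphi(\lambda x;\I)\ge\sum_{P}\varphi(\lambda\abs{\Delta x})
 \ge 2\,s_r(x)\,\varphi(\lambda(\eta-\eps)).
\]
Because $\eta>\eps$ and $\varphi>0$ on $(0,\infty)$, the quantity $\varphi(\lambda(\eta-\eps))$ is strictly positive. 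Dividing by $2\varphi(\lambda(\eta-\eps))$ yields \eqref{eq:active-count}.

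\textbf{Potential obstacles.} The only subtlety is that the inserted points must not collide and the two increments in each interval must be counted separately. Both are settled by the interiority of $\tau_I$ established in the first step. The argument needs only monotonicity and positivity of $\varphi$; no regularity of $\varphi$ is used.
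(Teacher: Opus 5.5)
Your proof is correct and follows the paper's own argument: choose an interior witness point in each active interval, refine $\cD_r$ by these points, and count two increments of size at least $\eta-\eps$ per active interval against the bound $M$. Your extra remarks make explicit two things the paper leaves tacit: that a maximizer exists by continuity, and that $\varphi(\lambda(\eta-\eps))>0$, which is needed before dividing.
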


\begin{proof}
For every active interval $I=[a,b]$, choose $c_I\in(a,b)$ with
$\abs{x(c_I)}\ge\eta$.  The endpoints are smaller than $\eps$ in
modulus, and hence
\[
 \abs{x(c_I)-x(a)},\ \abs{x(b)-x(c_I)}\ge\eta-\eps.
\]
Insert all the points $c_I$ into the dyadic partition.  The two increments
inside each active interval contribute at least
$2\varphi(\lambda(\eta-\eps))$.  These intervals have disjoint interiors, so
\eqref{eq:active-count} follows.
\end{proof}

\begin{proposition}[Nested peaks]\label{prop:nested-peaks}
Let $\varphi$ be an admissible variation gauge, and let
$(x_n)\subset C[0,1]$ satisfy
\begin{equation}\label{eq:peak-sequence}
 x_n(t)\longrightarrow0\quad(t\in[0,1]),
 \qquad \norm{x_n}_\infty=1,
 \qquad \Var_\varphi(\lambda x_n;\I)\le M.
\end{equation}
Fix $m\in\N$ and $0<\eta<1$.  Then there exist a subsequence,
signs $\sigma_1,\ldots,\sigma_m\in\{-1,1\}$, and functions
$z_j=\sigma_jx_{n_j}$ such that, for
\[
 u=z_1+\cdots+z_m,
\]
the following hold:
\begin{align}
 \norm{u}_\infty&\le1+(m-1)\eta,\label{eq:u-upper}\\
 \Var_\varphi(\gamma u;\I)
 &\ge 2m\,
 \varphi\bigl(\gamma[1-(2m-1)\eta]\bigr)
 \quad(\gamma>0),\label{eq:u-lower}
\end{align}
provided $1-(2m-1)\eta>0$.
\end{proposition}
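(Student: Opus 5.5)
The plan is to build the $z_j$ inductively so that their \emph{$\eta$-large sets} $L_j=\{t:\abs{z_j(t)}>\eta\}$ are pairwise disjoint. Each $z_j$ should also come with a peak $p_j$, $\abs{z_j(p_j)}=1$, and two flanking points $a_j<p_j<b_j$ at which every $z_i$ has modulus $<\eta$. The intervals $[a_j,b_j]$ should have pairwise disjoint interiors. Granting this, \eqref{eq:u-upper} follows pointwise: at any $t$ at most one $z_j(t)$ exceeds $\eta$ in modulus, and that one is at most $1$. For \eqref{eq:u-lower}, note that at $p_j$ we have $\abs{u(p_j)}\ge 1-(m-1)\eta$, while at $a_j,b_j$ we have $\abs{u}<m\eta$. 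Hence each of the $2m$ increments $u(p_j)-u(a_j)$ and $u(b_j)-u(p_j)$ has modulus at least $1-(2m-1)\eta$. We insert all the points $a_j,p_j,b_j$ into a partition and use monotonicity of $\varphi$ together with $\gamma>0$. The signs $\sigma_j$ play no role in this scheme; I would simply take $\sigma_j=1$, or use them to fix $z_j(p_j)=+1$ for convenience.

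The key preliminary step is a concentration property obtained from \cref{lem:active-count}. Fix $\eps<\eta$ and $r\in\N$. Since $\cD_r$ is finite and $x_n\to0$ pointwise, for large $n$ we have $\abs{x_n}<\eps$ on $\cD_r$. Then $x_n$ has at most $K:=\lfloor M/(2\varphi(\lambda(\eta-\eps)))\rfloor$ active intervals in $\cQ_r$. The bound $K$ is independent of $r$. There are finitely many possible configurations of active intervals, so we pass to a subsequence on which the configuration is constant. We do this for $r=1,2,\dots$ and diagonalize. The active sets are then nested unions of at most $K$ dyadic intervals of length $2^{-r}$. Consequently there is a finite set $A$ with $\#A\le K$ such that, for each open $V\supset A$, eventually $\{t:\abs{x_n(t)}\ge\eta\}\subset V$.

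Induction, using this property. Suppose $z_1,\dots,z_{j-1}$ have been chosen. Each of them is $<\eta/2$ in modulus on $A$, and, together with its flanking points, lies at positive distance from a neighbourhood of $A$ in the appropriate sense. Choose $n_j$ so large that three conditions hold.
\begin{itemize}
\item $\abs{x_{n_j}}<\eta/2$ on $A$ and at all previously chosen points $a_i,p_i,b_i$.
\item The $\eta$-large set of $x_{n_j}$ lies in an open set $V_{j-1}\supset A$ on which every earlier $z_i$ satisfies $\abs{z_i}<\eta/2$; this set exists by continuity of the $z_i$ and their smallness on $A$. This gives disjointness of $L_j$ from $L_1,\dots,L_{j-1}$.
\item The peak $p_j$ of $x_{n_j}$ lies in $V_{j-1}$, as it must by the previous condition.
\end{itemize}
For $a_j,b_j$, take the endpoints of the dyadic interval of a fine level $r_j$ containing $p_j$. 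We choose $r_j$ so large that this interval lies inside $V_{j-1}$ and misses the compact sets $[a_i,b_i]\setminus V_{j-1}$ relevant to earlier steps; shrinking the $V$'s so that the earlier intervals $[a_i,b_i]$ avoid $V_{j-1}$ arranges the disjoint interiors. We also take $n_j$ larger than the index at which $\abs{x_n}<\eps$ holds on $\cD_{r_j}$. The endpoints are then small for $z_j$, and also for the earlier $z_i$, since they lie in $V_{j-1}$. Smallness of the later $z_k$ at $a_j,p_j,b_j$ is imposed when those later indices are chosen, by pointwise convergence at finitely many points.

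The main obstacle I expect is exactly the bookkeeping of the second step together with making the nesting consistent. We must make sure the earlier $z_i$ stay small on the region where later peaks live, even though a later peak can sit arbitrarily close to an earlier one, both clustering at the same point of $A$. The finite set $A$ and the choice $\abs{z_i}<\eta/2$ on a neighbourhood of $A$ are designed to resolve this. I would verify carefully that the neighbourhoods $V_j$ can be shrunk at each stage without invalidating earlier choices.
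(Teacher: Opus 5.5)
Your global scheme is sound. The finite set $A$ and its concentration property follow correctly from the active-count bound, disjointness of the $\eta$-large sets yields \eqref{eq:u-upper}, and the values at peaks and flanks yield \eqref{eq:u-lower}. The gap is in the choice of the flanking points $a_j,b_j$, and it is the crux of the proposition.

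\emph{The choice as written is circular.} You fix $r_j$ after $p_j$, the peak of $x_{n_j}$, and then enlarge $n_j$ so that $\abs{x_{n_j}}<\eps$ on $\cD_{r_j}$, which moves the peak. Worse, for fixed $p_j$ the endpoints of the level-$r$ dyadic interval containing $p_j$ tend to $p_j$, where $\abs{z_j}=1$. So a fine level chosen after $p_j$ is incompatible with $\abs{z_j(a_j)},\abs{z_j(b_j)}<\eta$.

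\emph{Restoring the order does not help.} Suppose you fix $r_j$ so that the union of the stabilized active intervals of level $r_j$ lies in $V_{j-1}$, and only then take $n_j$ large. The peak lies in one of those active intervals. But each of them contains a point of $A$: it contains an active interval of the next level, and so on, and this chain shrinks to a point of $A$. Hence $[a_j,b_j]$ meets every later $V_k\supset A$, and ``shrinking the $V$'s'' cannot separate it from later intervals. Concretely, suppose the stabilized active intervals form a single chain shrinking to a point $\tau$, as for narrow tents accumulating at an irrational $\tau$. Then all your intervals are nested around $\tau$, and their interiors are never disjoint. This is exactly the clustering you flagged as the main obstacle.

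\emph{How the paper handles it.} It pigeonholes the peaks into one chain $I_{r,\omega}=[\alpha_r,\beta_r]$. As excursion interval it takes whichever of $[\alpha_r,\alpha_{r+1}]$ and $[\beta_{r+1},\beta_r]$ contains $c_r$. This works for two reasons. First, $z_r$ is made smaller than $\eps$ on the deeper set $G_{r+1}$, so $c_r\notin I_{r+1,\omega}$. Second, all these endpoints are dyadic points at which every $z_i$ was made small.

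\emph{A repair inside your own framework.} Run the concentration argument at threshold $\eta/2$. Let $V_{j-1}\supset A$ be open with every earlier $z_i$ of modulus $<\eta/2$ on it. Choose $n_j$ so that:
\begin{enumerate}[label=(\alph*)]
\item $\abs{x_{n_j}}<\eta/2$ on $A\cup\{0,1\}$ and at all earlier points;
\item $\{\abs{x_{n_j}}\ge\eta/2\}\subset V_{j-1}$.
\end{enumerate}
Let $(a_j,b_j)$ be the connected component of $\{\abs{z_j}>\eta/2\}$ containing $p_j$. Then $0<a_j<p_j<b_j<1$ and $\abs{z_j(a_j)}=\abs{z_j(b_j)}=\eta/2$. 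The closed intervals $[a_j,b_j]\subset\{\abs{z_j}\ge\eta/2\}$ are pairwise disjoint, since $\{\abs{z_j}\ge\eta/2\}\subset V_{j-1}\subset\{\abs{z_i}<\eta/2\}$ for $i<j$. Your estimates then hold with $\eta/2$ in place of $\eta$, which is more than enough.
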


\begin{proof}
We divide the construction into three steps.

\smallskip
\emph{Step 1: stabilization of the active dyadic intervals.}
Choose  $0<\varepsilon<\eta<1$ and for each $r$, pointwise convergence at the finite set $\cD_r$ allows us
to discard finitely many terms so that
\begin{equation}\label{eq:small-dyadic}
 \abs{x_n(t)}<\eps\qquad(t\in\cD_r).
\end{equation}
There are only finitely many possible families of active intervals at level
$r$.  At the $r$th stage, intersect the previously retained infinite set
of indices with the cofinite set for which \eqref{eq:small-dyadic} holds.
Among the resulting indices, one family of active intervals must occur
infinitely often.  Repeating this construction, we obtain
nested infinite sets of indices
\[
 \N\supset\mathcal N_1\supset\mathcal N_2\supset\cdots
\]
and fixed families $\cA_r\subset\cQ_r$ such that every
$n\in\mathcal N_r$ satisfies \eqref{eq:small-dyadic} and has precisely
$\cA_r$ as its family of active intervals at level $r$.

Fix $r\ge1$ and $n\in\mathcal N_{r+1}$.  If $I\in\cA_r$, a point witnessing its
activity cannot belong to $\cD_{r+1}$, by
\eqref{eq:small-dyadic}.  It therefore lies in the interior of a
level-$(r+1)$ interval contained in $I$, and that interval belongs to
$\cA_{r+1}$.  Conversely, every interval in $\cA_{r+1}$ is contained in
an interval in $\cA_r$.  Consequently, $p_r:=\#\cA_r$ is nondecreasing.
By \cref{lem:active-count}, it is bounded independently of $r$, and hence
there are $R\in\N$ and $p\ge1$ such that $p_r=p$ for every $r\ge R$.
For such $r$, the map that sends each interval in $\cA_{r+1}$ to the
unique interval in $\cA_r$ containing it is a bijection.  We may label
\[
 \cA_r=\{I_{r,1},\ldots,I_{r,p}\}
 \quad\text{so that}\quad
 I_{r+1,j}\subset I_{r,j}\qquad(r\ge R).
\]
Each chain shrinks to a point, say
$\bigcap_{r\ge R} I_{r,j}=\{\tau_j\}$; these limit points need not be
distinct.

Choose $k_r\in\mathcal N_r$ for $r\ge R$ so that
$k_R<k_{R+1}<\cdots$, put
$w_r=x_{k_r}$, and choose $d_r$ with $\abs{w_r(d_r)}=1$.  Since
$\eta<1$, the point $d_r$ lies in a member of $\cA_r$.  The infinite
pigeonhole principle therefore supplies an index
$\omega\in\{1,\ldots,p\}$ and strictly increasing levels
$R\le r_1<r_2<\cdots$ such that
$d_{r_j}\in I_{r_j,\omega}$ for every $j$.  Set
\[
 q_j=r_j,\qquad
 y_j=\sigma_jw_{r_j},\qquad
 e_j=d_{r_j},
\]
where $\sigma_j\in\{-1,1\}$ is chosen so that $y_j(e_j)=1$.
Thus $q_j\nearrow\infty$, $e_j\in I_{q_j,\omega}$, and $y_j$ has
precisely $\cA_{q_j}$ as its family of active intervals at the associated
level.
The original indices $k_{r_j}$ also increase, so $y_j(t)\to0$ for every
$t$.  Passing to a further subsequence and relabeling the associated
levels and points along with the functions, we may moreover suppose that
\begin{equation}\label{eq:small-limit-points}
 \abs{y_r(\tau_j)}<\eps
 \qquad(1\le j\le p).
\end{equation}

\smallskip
\emph{Step 2: the finite gliding hump.}
Put $K=\{\tau_1,\ldots,\tau_p\}$.  By continuity and
\eqref{eq:small-limit-points}, each $y_r$ is smaller than $\eps$ on some
open neighborhood $U_r$ of $K$.  For a stabilized level $s$, write
\[
 G(s)=\bigcup_{j=1}^p I_{s,j}.
\]
Because the finitely many chains shrink to $K$, for every open
neighborhood $U$ of $K$ one has $G(s)\subset U$ for all sufficiently
large $s$.

We now give the finite induction that selects the humps.  Choose
$\nu_1$, set $z_1=y_{\nu_1}$, $\ell_1=q_{\nu_1}$, and
$c_1=e_{\nu_1}$.  Suppose that $\nu_i$, $z_i=y_{\nu_i}$,
$\ell_i=q_{\nu_i}$, and $c_i=e_{\nu_i}$ have been chosen for
$1\le i\le r$.  Choose $\nu_{r+1}>\nu_r$ so large that, with
$\ell_{r+1}=q_{\nu_{r+1}}$,
\begin{equation}\label{eq:inductive-containment}
 G(\ell_{r+1})\subset G(\ell_r)\cap
 \bigcap_{i=1}^r U_{\nu_i},
\end{equation}
and so that $y_{\nu_{r+1}}$ is smaller than $\eps$ at every dyadic
endpoint of each of the finitely many intervals
$I_{\ell_i,j}$, where $1\le i\le r$ and $1\le j\le p$.  Indeed, once
$q_j>\ell_r$, nesting of
the stabilized chains gives $G(q_j)\subset G(\ell_r)$; since those
chains shrink to $K$, one also has
$G(q_j)\subset\bigcap_{i=1}^rU_{\nu_i}$ for all sufficiently large $j$.
The endpoint condition follows independently from pointwise convergence
on a finite set.  Thus both requirements can be imposed simultaneously.
Put $z_{r+1}=y_{\nu_{r+1}}$ and
$c_{r+1}=e_{\nu_{r+1}}$, and continue until $z_m$ has been selected.
Finally choose a stabilized level $\ell_{m+1}>\ell_m$ so deep that
\begin{equation}\label{eq:final-containment}
 G(\ell_{m+1})\subset G(\ell_m)\cap
 \bigcap_{i=1}^m U_{\nu_i}.
\end{equation}

For $1\le r\le m+1$, relabel
\[
 I_{r,j}:=I_{\ell_r,j},
 \qquad
 G_r=I_{r,1}\cup\cdots\cup I_{r,p}.
\]
The induction gives
\begin{equation}\label{eq:nested-smallness}
 G_{r+1}\subset G_r\cap
 \bigcap_{i=1}^r\{t:\abs{z_i(t)}<\eps\}.
\end{equation}
Here the indices have been relabeled.  We retain the following facts:
\begin{enumerate}[label=(\roman*)]
\item $\abs{z_r(t)}<\eta$ for $t\notin G_r$;
\item there is $c_r\in I_{r,\omega}$ with $z_r(c_r)=1$;
\item $c_r\notin G_{r+1}$;
\item $\abs{z_i(t)}<\eps$ whenever $1\le i\le m$ and $t$ is an endpoint
      of one of the intervals $I_{r,j}$, with $1\le r\le m+1$ and
      $1\le j\le p$.
\end{enumerate}
Indeed, \eqref{eq:nested-smallness} makes $z_r$ smaller than $\eps$ on
all of $G_{r+1}$, whereas $z_r(c_r)=1$.  Property (iv) follows from
\eqref{eq:small-dyadic} at a function's own level, from the finite
endpoint condition in the induction for earlier levels, and from
\eqref{eq:inductive-containment} and \eqref{eq:final-containment} for
later levels.

Write $I_{r,\omega}=[\alpha_r,\beta_r]$.  Properties~\textup{(ii)} and
\textup{(iii)} give
$c_r\in I_{r,\omega}\setminus I_{r+1,\omega}$.  Moreover, $c_r$ is not
an endpoint of $I_{r,\omega}$, because $z_r(c_r)=1$ whereas
\eqref{eq:small-dyadic} makes $z_r$ smaller than $\eps<1$ at its own
dyadic endpoints.  Hence exactly one of the following alternatives
occurs.  If
$c_r<\alpha_{r+1}$, put
\[
 (a_r,b_r)=(\alpha_r,\alpha_{r+1});
\]
if $c_r>\beta_{r+1}$, put
\[
 (a_r,b_r)=(\beta_{r+1},\beta_r).
\]
Then $a_r<c_r<b_r$.  The intervals $[a_r,b_r]$ have pairwise disjoint
interiors.  Indeed, the interior of $[a_r,b_r]$ is contained in
$I_{r,\omega}\setminus I_{r+1,\omega}$, whereas for $s>r$ one has
$[a_s,b_s]\subset I_{s,\omega}\subset I_{r+1,\omega}$.
Moreover,
\begin{equation}\label{eq:endpoint-smallness}
 \abs{z_i(a_r)},\ \abs{z_i(b_r)}<\eps
 \qquad(1\le i,r\le m).
\end{equation}
This is immediate from property~\textup{(iv)}, because $a_r$ and $b_r$
are endpoints of the dyadic intervals $I_{r,\omega}$ or
$I_{r+1,\omega}$.

\begin{figure}[tb]
\centering
\begin{tikzpicture}[x=0.78cm,y=0.78cm,>=Stealth,
  outer/.style={line width=1.05pt,blue!65!black},
  inner/.style={line width=2.1pt,blue!28},
  peak/.style={fill=orange!85!black,draw=orange!85!black}]
  \draw[dashed,gray!70,line width=.8pt] (6.0,0.65)--(6.0,-3.15)
      node[below,text=black] {$\tau_\omega$};

  \draw[orange!85!black,line width=2.4pt] (0,0)--(3.1,0);
  \draw[outer] (0,0)--(11,0);
  \draw[inner] (3.1,0)--(8.8,0);
  \fill[peak] (1.7,0) circle (2.2pt);
  \node[above] at (1.7,0) {$c_1$};
  \node[left] at (0,0) {$I_{1,\omega}$};
  \node[above,blue!60!black] at (5.95,0) {$I_{2,\omega}$};
  \node[below=4pt,orange!75!black] at (1.55,0) {$[a_1,b_1]$};

  \draw[outer] (3.1,-1.15)--(8.8,-1.15);
  \draw[inner] (4.6,-1.15)--(7.1,-1.15);
  \fill[peak] (8.0,-1.15) circle (2.2pt);
  \node[above] at (8.0,-1.15) {$c_2$};
  \node[left] at (3.1,-1.15) {$I_{2,\omega}$};
  \node[above,blue!60!black] at (5.85,-1.15) {$I_{3,\omega}$};

  \draw[outer] (4.6,-2.3)--(7.1,-2.3);
  \draw[inner] (5.45,-2.3)--(6.35,-2.3);
  \fill[peak] (4.95,-2.3) circle (2.2pt);
  \node[above] at (4.95,-2.3) {$c_3$};
  \node[left] at (4.6,-2.3) {$I_{3,\omega}$};
  \node[above,blue!60!black] at (5.9,-2.3) {$I_{4,\omega}$};

\end{tikzpicture}
\caption{One active chain.  The peak $c_r$ lies between
$I_{r,\omega}$ and the deeper interval $I_{r+1,\omega}$, where $z_r$ is
small.  Choosing the side that contains $c_r$ produces excursion
intervals with disjoint interiors.}
\label{fig:nested-peaks}
\end{figure}

\smallskip
\emph{Step 3: estimates for the sum.}
Let $u=\sum_{i=1}^m z_i$.  At the peak $c_r$, all earlier functions are
smaller than $\eps$ by \eqref{eq:nested-smallness}, while all later ones
are smaller than $\eta$ because $c_r\notin G_{r+1}\supset G_i$.  Hence
\begin{equation}\label{eq:peak-value}
 \abs{u(c_r)}
 \ge1-(r-1)\eps-(m-r)\eta.
\end{equation}
At either endpoint of the excursion interval,
\eqref{eq:endpoint-smallness} gives
\begin{equation}\label{eq:base-value}
 \abs{u(a_r)},\ \abs{u(b_r)}\le m\eps.
\end{equation}
It follows from \eqref{eq:peak-value}--\eqref{eq:base-value} and
$\eps<\eta$ that
\begin{align}
 \abs{u(c_r)-u(a_r)},\ \abs{u(b_r)-u(c_r)}
 &\ge1-(m+r-1)\eps-(m-r)\eta\notag\\
 &\ge1-(2m-1)\eta.\label{eq:excursion-size}
\end{align}
Because the finitely many excursion intervals have disjoint interiors,
their endpoints and peak points, after repeated endpoints have been
deleted and the remaining points arranged in increasing order, and
augmented by $0$ and $1$, form a single partition.  The two consecutive
increments inside each triple $a_r<c_r<b_r$ are among the increments of
that partition, so none of the $2m$ terms in
\eqref{eq:excursion-size} is lost.  This proves \eqref{eq:u-lower}.

It remains to estimate the uniform norm.  The sets $G_r$ are nested.  If
$t\in G_r\setminus G_{r+1}$, the functions preceding $z_r$ are smaller
than $\eps$, $z_r$ is bounded by $1$, and those following it are smaller
than $\eta$.  If $t$ belongs to none of the $G_r$, all terms are smaller
than $\eta$; if $t\in G_{m+1}$, all are smaller than $\eps$.  Therefore
\[
 \abs{u(t)}\le1+(r-1)\eps+(m-r)\eta
 \le1+(m-1)\eta,
\]
with the evident, even smaller bounds in the two endpoint cases.  This is
\eqref{eq:u-upper}.
\end{proof}

\section{The finite-dimensionality theorem}
\label{sec:main-theorem}

We first record the sequential condition used by Levin and Milman.

\begin{lemma}\label{lem:compactness-criterion}
Let $\varphi$ be a regular admissible variation gauge, and
let $E$ be a closed linear subspace of $C[0,1]$ for which
\eqref{eq:uniform-modular} holds.  Suppose that
\begin{quote}
\textup{(P)}\qquad Every bounded sequence in $E$ which converges
pointwise to zero converges uniformly to zero.
\end{quote}
Then the closed unit ball of $E$ is compact.  Consequently, $E$ is
finite-dimensional.
\end{lemma}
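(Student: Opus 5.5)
We prove sequential compactness of the closed unit ball $B_E$ and then invoke Riesz's theorem: a normed space whose closed unit ball is compact is finite-dimensional.

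\emph{Extracting a pointwise limit.} Let $(f_n)\subset B_E$. By \eqref{eq:uniform-modular}, $\Var_\varphi(\lambda f_n;\I)\le M$ for all $n$, and the sequence is uniformly bounded by $1$. So \cref{lem:helly} gives a subsequence, not relabeled, converging pointwise on $[0,1]$ to some function $f$. At this stage $f$ is only known to lie in $\cV_\varphi(\I)$; it need not be continuous or belong to $E$.

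\emph{Upgrading to uniform convergence.} Consider the double sequence of differences $f_n-f_k$. These lie in $E$, satisfy $\norm{f_n-f_k}_\infty\le 2$, and converge pointwise to zero as $n,k\to\infty$ jointly. Suppose $(f_n)$ is not uniformly Cauchy. Then there are $\delta>0$ and indices $n_j<k_j$ with $n_j\to\infty$ and $\norm{f_{n_j}-f_{k_j}}_\infty\ge\delta$. Set $g_j=f_{n_j}-f_{k_j}$. Then $(g_j)$ is a bounded sequence in $E$ and $g_j(t)\to f(t)-f(t)=0$ for every $t$. Property (P) gives $\norm{g_j}_\infty\to0$, which contradicts $\norm{g_j}_\infty\ge\delta$. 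Hence $(f_n)$ is uniformly Cauchy. Since $E$ is closed in $C[0,1]$, it converges uniformly to some $g\in E$. Closedness of $B_E$ gives $\norm{g}_\infty\le1$, and $g=f$. Thus $B_E$ is sequentially compact, hence compact because $E$ is a metric space, and Riesz's lemma yields $\dim E<\infty$.

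\emph{Main obstacle.} The one delicate point is that (P) concerns sequences in $E$, while the Helly limit $f$ is not a priori in $E$. We cannot apply (P) to $f_n-f$ directly. The plan avoids this by applying (P) only to differences of members of the subsequence, which always lie in $E$. The only further care needed is choosing the pairs $(n_j,k_j)$ with both indices tending to infinity, so that pointwise convergence of $g_j$ to zero follows from pointwise convergence of $(f_n)$. Regularity of $\varphi$ is used only through \cref{lem:helly}.
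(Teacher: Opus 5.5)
Your proposal is correct and follows essentially the same route as the paper. The paper likewise uses \eqref{eq:uniform-modular} and \cref{lem:helly} to extract a pointwise convergent subsequence, applies (P) to differences of subsequence members so that a failure of the uniform Cauchy property is contradicted, uses closedness of $E$ for the limit, and concludes with sequential compactness and Riesz's lemma.
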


\begin{proof}
Let $(f_n)$ be a sequence in the closed unit ball of $E$.  By
\eqref{eq:uniform-modular} and \cref{lem:helly}, it has a pointwise
convergent subsequence $(g_n)$.  If $(g_n)$ were not uniformly Cauchy,
there would exist $\delta>0$ and increasing index sequences $(p_n)$ and
$(q_n)$ such that
\[
 \norm{g_{p_n}-g_{q_n}}_\infty\ge\delta.
\]
The difference sequence is bounded in $E$ and converges pointwise to
zero, contrary to property~\textup{(P)}.  Thus $(g_n)$ converges uniformly;
its limit belongs to the closed unit ball of $E$, since $E$ is closed and
the norm is continuous.  The unit ball is sequentially compact and hence
compact, since the uniform norm is metrizable.  Riesz's lemma now implies
that $E$ is finite-dimensional.
\end{proof}

We are now in a position to prove our main result.

\begin{proof}[Proof of \cref{thm:intro-main}]
By \cref{lem:lsc-regularization}, replacing $\varphi$ by $\varphi_-$ does
not change the scaled variation class.  We may therefore relabel
$\varphi_-$ as $\varphi$ and assume throughout the proof that the gauge is
lower semicontinuous.

By \cref{lem:baire}, choose $\lambda>0$ and $M<\infty$ such that
\eqref{eq:uniform-modular} holds.  In view of
\cref{lem:compactness-criterion}, it is enough to prove
property~\textup{(P)}.

Suppose, to the contrary, that there is a bounded sequence $(y_n)\subset E$
which converges pointwise to zero but not uniformly.  Passing to a
subsequence, there is $d>0$ such that $\norm{y_n}_\infty\ge d$.  The
normalized functions
\[
 x_n=\frac{y_n}{\norm{y_n}_\infty}
\]
still converge pointwise to zero, satisfy $\norm{x_n}_\infty=1$, and, by
\eqref{eq:uniform-modular}, satisfy
\[
 \Var_\varphi(\lambda x_n;\I)\le M.
\]

Because $\varphi(\lambda/2)>0$, choose $m\in\N$ so large that
\begin{equation}\label{eq:choose-m}
 2m\varphi(\lambda/2)>M.
\end{equation}
Next choose
\begin{equation}\label{eq:choose-eta}
 0<\eta<\frac{1}{5m-3}.
\end{equation}
Apply \cref{prop:nested-peaks} and put $u=z_1+\cdots+z_m$.  Set
$C=\norm{u}_\infty$. Since $1-(2m-1)\eta>0$, applying \eqref{eq:u-lower} with $\gamma=1$, we obtain
\[ \Var_\varphi(u;\I) \geq 2m\varphi(1-(2m-1)\eta)>0\]
hence $u \neq 0$, and consequently $C>0$, while
\eqref{eq:u-upper} gives
\[
 C\le1+(m-1)\eta.
\]
The choice \eqref{eq:choose-eta} is exactly what is needed for
\begin{equation}\label{eq:ratio-half}
 \frac{1-(2m-1)\eta}{1+(m-1)\eta}>\frac12.
\end{equation}
Since $f=u/C$ belongs to $E$ and $\norm{f}_\infty=1$, the uniform
variation bound gives $\Var_\varphi(\lambda f)\le M$.  On the other hand,
applying \eqref{eq:u-lower} with $\gamma=\lambda/C$ and using
monotonicity of $\varphi$ and \eqref{eq:ratio-half} gives
\begin{align*}
 \Var_\varphi(\lambda f;\I)
 &\ge2m\,
 \varphi\left(
  \lambda\frac{1-(2m-1)\eta}{C}\right)\\
 &\ge2m\,
 \varphi\left(
  \lambda\frac{1-(2m-1)\eta}{1+(m-1)\eta}\right)\\
 &\ge2m\varphi(\lambda/2)>M,
\end{align*}
which is a contradiction.  Therefore property~\textup{(P)} holds, and
\cref{lem:compactness-criterion} completes the proof.
\end{proof}

\begin{remark}[Where the nonlinear difficulty occurs]\label{rem:normalization}
For $\varphi(t)=t$, the last estimate reduces to the homogeneous Jordan
variation estimate in the original Levin--Milman argument.  For a
general $\varphi$, one cannot write
\[
 \Var_\varphi(u/C)=C^{-1}\Var_\varphi(u).
\]
The required operation is to replace the size $A$ of each increment by
$A/C$ inside $\varphi$.  Choosing $m$ first and then
$\eta=O(m^{-1})$ keeps the quotient in \eqref{eq:ratio-half} bounded away
from zero.  This is essential even for $\varphi(t)=t^p$ with $p>1$.
\end{remark}

\begin{remark}[Relation with the Levin--Milman argument]
\label{rem:LM-comparison}
Levin and Milman's proof is organized around the same three ingredients
used here: a uniform variation bound on the uniform unit ball, Helly
selection, and a nested-interval argument excluding normalized
pointwise-null sequences.  The present reconstruction makes the required
finite selections and estimates explicit and differs in three ways that
are useful in the nonhomogeneous setting.
\begin{enumerate}[label=(\roman*)]
\item The opening reduction ``without loss of generality, $f(0)=0$''
      is justified by replacing $E$ with
      $E\cap\ker\delta_0$, where $\delta_0(f)=f(0)$; this subspace has
      codimension at most one in $E$.  The Baire argument in
      \cref{lem:baire} avoids the reduction altogether, as well as the
      subsequent equivalence-of-norms argument.
\item The nested selection requires uniform smallness outside the active
      set: $\abs{z_r(t)}<\eta$ whenever $t\notin G_r$.
      We state this condition explicitly, retain smallness at every
      dyadic endpoint selected at an earlier stage, and record all
      reindexing in the finite gliding-hump construction.  In condition~(b)
      of the English rendering available to us, the displayed inequality
      has the opposite sign.  The definition of the active intervals and
      the ensuing uniform-norm estimate show that $<\eta$ is intended.  We
      correct that typographical error here, without making any assertion
      about the wording of the original Russian text.
\item We control the other humps at every peak and endpoint and use
      pairwise interior-disjoint annular intervals.  This yields the
      explicit bound \eqref{eq:u-lower}; the final normalization is then
      performed inside $\varphi$, as required for nonhomogeneous gauges.
\end{enumerate}
Features of the present result beyond the Jordan case are the passage to
arbitrary finite-valued admissible gauges, the individual scales $\lambda_f$,
the removal of continuity, convexity, and local doubling, and the category
conclusions below.
\end{remark}

\begin{corollary}[Levin--Milman]\label{cor:LM}
If a closed linear subspace $E$ of $C[0,1]$ consists entirely of functions
of bounded Jordan variation, then $E$ is finite-dimensional.
\end{corollary}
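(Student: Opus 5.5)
This is the special case $\varphi(t)=t$ of \cref{thm:intro-main}, so the plan is simply to check that the hypotheses of the main theorem are met and invoke it.

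First I will verify that the identity gauge $\varphi(t)=t$ is admissible in the sense of \cref{def:gauge}. It is finite-valued and nondecreasing on $[0,\infty)$, and it satisfies $\varphi(0)=0<\varphi(t)$ for $t>0$. It is continuous, hence also regular, although regularity is not needed since the main theorem handles arbitrary admissible gauges.

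Next, for this gauge $\Var_\varphi(f;[0,1])$ coincides with the Jordan variation $\Var(f;[0,1])$ from the introduction. If every $f\in E$ has bounded Jordan variation, I will take $\lambda_f=1$ for each $f$, so that
\[
 \Var_\varphi(\lambda_f f;[0,1])=\Var(f;[0,1])<\infty .
\]
More generally, homogeneity gives $\Var(\lambda f)=\lambda\Var(f)$, so any scale $\lambda>0$ would also work.

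Thus $E$ is a closed linear subspace of $C[0,1]$ satisfying the hypothesis of \cref{thm:intro-main}, and that theorem yields $\dim E<\infty$.

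There is no genuine obstacle here. The only point needing care is that the raw and scaled classes coincide for this gauge. This holds either by homogeneity or by the local $\delta_2$ condition, which is trivially satisfied since $\varphi(2t)/\varphi(t)=2$. Consequently, assuming ``bounded Jordan variation'' in its raw form is enough to place $E$ inside $\cV_\varphi(\I)$.
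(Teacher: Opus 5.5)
Your proof is correct and is exactly the paper's argument: apply \cref{thm:intro-main} with $\varphi(t)=t$ and $\lambda_f=1$. Your closing remark about the raw and scaled classes coinciding is harmless but unnecessary. The argument only uses the trivial inclusion $\cR_\varphi(\I)\subset\cV_\varphi(\I)$, obtained by taking $\lambda=1$.
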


\begin{proof}
Apply \cref{thm:intro-main} with $\varphi(t)=t$ and $\lambda_f=1$.
\end{proof}

\begin{corollary}[Power variation]\label{cor:pvar}
Fix $0<p<\infty$.  If $E$ is a closed linear subspace of $C[0,1]$
and every $f\in E$ has finite $p$-variation, that is,
\[
 \sup_{0=t_0<\cdots<t_n=1}
 \sum_{i=1}^n\abs{f(t_i)-f(t_{i-1})}^p<\infty,
\]
then $E$ is finite-dimensional.  If $0<p<1$, then every element of $E$ is
constant and, in particular, $\dim E\le1$.
\end{corollary}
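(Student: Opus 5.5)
The first assertion follows at once from \cref{thm:intro-main}. We take the gauge $\varphi_p(t)=t^p$, which is admissible: it is finite-valued and nondecreasing on $[0,\infty)$, vanishes at $0$, and is positive for $t>0$. The hypothesis says that $\Var_p(f;\I)<\infty$ for every $f\in E$, so the scaled condition of the main theorem holds with $\lambda_f=1$. Hence $E$ is finite-dimensional.

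For $0<p<1$ we prove the stronger claim that every continuous $f$ with $\Var_p(f;\I)<\infty$ is constant. Fix $f$ and a subinterval $[s,t]\subset\I$. Take the uniform partition $s=t_0<\cdots<t_n=t$ of $[s,t]$ and set $\delta_i=\abs{f(t_i)-f(t_{i-1})}$. By the triangle inequality $\abs{f(t)-f(s)}\le\sum_i\delta_i$, and $\sum_i\delta_i\le\bigl(\max_i\delta_i\bigr)^{1-p}\sum_i\delta_i^p$. The sum $\sum_i\delta_i^p$ is bounded by $\Var_p(f;[s,t])\le\Var_p(f;\I)$, since a partition of $[s,t]$ extends to one of $\I$ by adding nonnegative terms. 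As $n\to\infty$, uniform continuity of $f$ gives $\max_i\delta_i\to0$, and $1-p>0$, so $\abs{f(t)-f(s)}\le\lim_{n\to\infty}\bigl(\max_i\delta_i\bigr)^{1-p}\Var_p(f;\I)=0$. Thus $f(s)=f(t)$ for all $s<t$, and $f$ is constant.

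It follows that $E$ is contained in the space of constant functions, which is one-dimensional, so $\dim E\le1$. No step is a real obstacle: the only point to watch is that the finiteness hypothesis for $\Var_p$ is used on subintervals, and the monotonicity of the variation under passage to subintervals noted above covers this.
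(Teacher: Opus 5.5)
Your proof is correct. The first part is the paper's argument: the gauge $\varphi(t)=t^p$ with $\lambda_f=1$. For $0<p<1$, however, you take a different route.

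The paper argues from nonconstancy. Given $s<t$ with $d=f(t)-f(s)>0$, it uses the intermediate value theorem to pick first-hitting times of the levels $f(s)+kd/n$. This yields $n$ increments of size exactly $d/n$, hence the explicit lower bound $\Var_p(f;\I)\ge d^pn^{1-p}\to\infty$.

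You instead start from a function with finite $p$-variation. You prove the interpolation inequality $\abs{f(t)-f(s)}\le(\max_i\delta_i)^{1-p}\Var_p(f;\I)$ on uniform partitions, and then let the mesh shrink using uniform continuity.

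What each approach buys:
\begin{itemize}
\item Your version avoids the first-hitting-time construction and the existence of the minima. It needs only that increments over fine partitions become small. It also gives an estimate in terms of the modulus of continuity.
\item The paper's version exhibits an explicit family of partitions witnessing divergence at rate $n^{1-p}$. The paper reuses this concrete construction later, in the all-$p$ corollary and in the example with $\varphi(t)=\sqrt t$ at an arbitrary scale $\lambda$.
\end{itemize}
Both arguments extend to any gauge with $\varphi(\tau)/\tau\to\infty$ as $\tau\downarrow0$.
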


\begin{proof}
Take $\varphi(t)=t^p$ in \cref{thm:intro-main}.  Suppose now that $0<p<1$
and that $f\in C[0,1]$ is nonconstant.  Choose $s<t$ such that, after
replacing $f$ by $-f$ if necessary,
$d:=f(t)-f(s)>0$.  For $n\ge2$, set $t_0=s$, $t_n=t$, and, successively,
\[
 t_k=\min\left\{u\in[t_{k-1},t]:
 f(u)=f(s)+\frac{kd}{n}\right\}
 \qquad(1\le k\le n-1).
\]
These minima exist by continuity, and
$s=t_0<t_1<\cdots<t_n=t$.  After adjoining the endpoints of $[0,1]$ and
removing any repetitions, these points form part of a partition.  Hence
\[
 \Var_p(f;\I)\ge n\left(\frac dn\right)^p
 =d^pn^{1-p}\longrightarrow\infty.
\]
Thus every element of $E$ is constant when $0<p<1$.
\end{proof}

\section{Algebraic and Baire-category consequences}
\label{sec:consequences}

\subsection{Maximal dense-lineability without spaceability}

We use the terminology recalled in the Introduction; see also
\cites{AronGurariySeoane,AronEtAlLineability,BernalEtAlSurvey}.
For Young variation there is a particularly clean answer.

\begin{proposition}[Algebraic size of the variation classes]
\label{prop:lineability}
Let $\varphi$ be an admissible variation gauge.  Then
$C[0,1]\cap\cV_\varphi(\I)$ is a vector subspace of $C[0,1]$ and contains
no closed infinite-dimensional subspace.  If $\varphi$ is a Young
function, then both $\mathrm{YBV}_\varphi(\I)$ and
$C[0,1]\cap\cR_\varphi(\I)$ are maximal dense-lineable and are not
spaceable.  Moreover,
\[
 \dim \mathrm{YBV}_\varphi(\I)
 =\dim C[0,1]=\mathfrak c.
\]
\end{proposition}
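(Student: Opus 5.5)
We would split the proof into linearity, non-spaceability, density with maximal dimension, and the raw class.

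\emph{Linearity of the scaled class.} Let $f,g\in C[0,1]\cap\cV_\varphi(\I)$ with $\Var_\varphi(\lambda f)<\infty$ and $\Var_\varphi(\mu g)<\infty$. Put $\kappa=\min\{\lambda,\mu\}/2$. For every increment,
\[
\varphi(\kappa\abs{\Delta(f+g)})\le\varphi(2\kappa\max\{\abs{\Delta f},\abs{\Delta g}\})\le\varphi(\lambda\abs{\Delta f})+\varphi(\mu\abs{\Delta g}).
\]
The first inequality uses $\abs a+\abs b\le2\max\{\abs a,\abs b\}$, which is the same trick as in \cref{lem:baire}, together with monotonicity. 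Summing over a partition gives $\Var_\varphi(\kappa(f+g))<\infty$. Closure under scalar multiples is immediate, since the scale may absorb any nonzero constant. Hence the class is a vector subspace.

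\emph{Non-spaceability.} A closed subspace of $C[0,1]$ contained in $\cV_\varphi(\I)$ is finite-dimensional by \cref{thm:intro-main}. The raw class satisfies $\cR_\varphi\subset\cV_\varphi$, so the same conclusion holds for it.

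\emph{Density and dimension.} Now let $\varphi$ be a Young function. Take the vector space $L$ spanned by the exponentials $e_\alpha(t)=e^{\alpha t}$ for $\alpha>0$, together with the constants.
\begin{itemize}
\item $L$ contains all polynomials in its closure, since $(e^{\alpha t}-1)/\alpha\to t$ uniformly and iterating such limits recovers every power of $t$. Hence $L$ is dense by Weierstrass.
\item The family $\{e_\alpha\}$ is linearly independent (a standard Vandermonde or growth argument), so $\dim L=\mathfrak c$. Since $\dim C[0,1]=\mathfrak c$ because $C[0,1]$ is separable, this dimension is maximal.
\item Every $C^1$ function $h$ is Lipschitz with some constant $K$. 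By convexity, $\varphi(s)\le s\varphi(1)$ for $s\le1$. Choose $\lambda$ with $\lambda K\le1$, so that $\lambda\abs{\Delta h}\le1$ on every increment. Then
\[
\sum\varphi(\lambda\abs{\Delta h})\le\varphi(1)\lambda\sum\abs{\Delta h}\le\varphi(1)\lambda K.
\]
Thus $L\subset\mathrm{YBV}_\varphi(\I)$.
\end{itemize}

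\emph{The raw class.} This is the delicate point, because $\cR_\varphi$ need not be linear, so $L$ must itself sit inside it. We need $\Var_\varphi(h)<\infty$ at scale $1$, which may fail for a large Lipschitz constant $K$. Subdivide each increment so that every piece is at most $1$, using uniform continuity. For a $K$-Lipschitz $h$ and any partition, refine it so that each increment is at most $\min\{1,\cdot\}$. By superadditivity of the convex $\varphi$ with $\varphi(0)=0$, refining only increases the sum, so
\[
\sum\varphi(\abs{\Delta h})\le\sum\varphi(1)\abs{\Delta h}\le\varphi(1)K.
\]
Hence every Lipschitz function lies in $\cR_\varphi$, and $L\subset C[0,1]\cap\cR_\varphi(\I)$ with the same density and dimension conclusions. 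The main obstacle is this superadditivity-under-refinement step; convexity is exactly what makes it work. As a fallback, the direct estimate $\varphi(s)\le s\varphi(1)$ for $s\le1$, applied after refinement, suffices.

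Finally, $\dim\mathrm{YBV}_\varphi(\I)$ lies between $\dim L=\mathfrak c$ and $\dim C[0,1]=\mathfrak c$, so it equals $\mathfrak c$.
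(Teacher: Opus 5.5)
Your linearity, non-spaceability, density and dimension arguments are correct and essentially match the paper's. The raw-class step, however, rests on an inequality that points the wrong way, and it is the only place where you show that $C[0,1]\cap\cR_\varphi(\I)$ contains a dense subspace of dimension $\mathfrak c$.

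\textbf{The gap.} Superadditivity $\varphi(a)+\varphi(b)\le\varphi(a+b)$ means that splitting an increment on a monotone stretch \emph{decreases} the $\varphi$-sum. So a refined partition does not dominate the original one, and bounding refined sums tells you nothing about the given partition; this applies to your ``fallback'' as well. Your displayed bound is in fact false. Take the Young function $\varphi(t)=t^2$ and $h(t)=2t$, so $K=2$. The trivial partition $\{0,1\}$ already gives $\varphi(2)=4>\varphi(1)K=2$. Contrary to your worry, though, finiteness at scale $1$ never fails for large $K$.

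\textbf{The fix.} Use the chord of $\varphi$ over $[0,K]$ rather than over $[0,1]$. For $K>0$, convexity and $\varphi(0)=0$ give $\varphi(s)\le(s/K)\varphi(K)$ for $0\le s\le K$. Every increment of a $K$-Lipschitz $h$ satisfies $\abs{\Delta h}\le K$ and $\sum_i\abs{\Delta h}\le K$, so for every partition
\[
 \sum_i\varphi\bigl(\abs{\Delta h}\bigr)\le\frac{\varphi(K)}{K}\sum_i\abs{\Delta h}\le\varphi(K).
\]
This is exactly the paper's estimate $\varphi(Lh_i)\le h_i\varphi(L)$ with $h_i=t_i-t_{i-1}$.

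Alternatively, keep your bound $\varphi(s)\le s\varphi(1)$ for increments at most $1$. Fewer than $K$ increments exceed $1$, and each of those costs at most $\varphi(K)$. This gives $\Var_\varphi(h)\le K\varphi(K)+K\varphi(1)$.

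Either way you get $C^1[0,1]\subset C[0,1]\cap\cR_\varphi(\I)\subset\mathrm{YBV}_\varphi(\I)$ directly, which makes your separate scaled computation unnecessary. With this repair the rest of your proof goes through.
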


\begin{proof}
Let $f,g\in C[0,1]\cap\cV_\varphi(\I)$.  Choose $a,b>0$ such that
$\Var_\varphi(af)<\infty$ and $\Var_\varphi(bg)<\infty$, and set
$\lambda=\frac12\min\{a,b\}$.  For every increment,
\[
 \lambda\abs{\Delta(f+g)}
 \le \frac{a\abs{\Delta f}+b\abs{\Delta g}}2.
\]
The last expression is at most
$\max\{a\abs{\Delta f},b\abs{\Delta g}\}$.  Therefore monotonicity alone
gives
\[
 \varphi\bigl(\lambda\abs{\Delta(f+g)}\bigr)
 \le \varphi\bigl(a\abs{\Delta f}\bigr)
    +\varphi\bigl(b\abs{\Delta g}\bigr).
\]
Summing over a partition and taking the supremum proves that $f+g$ is in
the scaled class.  Closure under scalar multiplication follows by changing
the scale: if $c\ne0$ and $\Var_\varphi(af)<\infty$, then
$\Var_\varphi((a/\abs c)cf)<\infty$.  Hence
$C[0,1]\cap\cV_\varphi(\I)$ is a vector subspace for every admissible
gauge.  By \cref{thm:intro-main}, it contains no closed
infinite-dimensional subspace.

Now suppose that $\varphi$ is convex.  Every Lipschitz function belongs
even to the raw class.  If its Lipschitz constant is $L=0$, the function
is constant and its variation is zero.  If $L>0$, put
$h_i=t_i-t_{i-1}$.  Since $0\le h_i\le1$, convexity and
$\varphi(0)=0$ give
\[
 \varphi(Lh_i)
 =\varphi\bigl((1-h_i)0+h_iL\bigr)
 \le h_i\varphi(L).
\]
Consequently, for every partition,
\[
 \sum_i\varphi\bigl(\abs{f(t_i)-f(t_{i-1})}\bigr)
 \le \sum_i\varphi(Lh_i)
 \le \sum_i h_i\varphi(L)=\varphi(L).
\]
Thus
$C^1[0,1]\subset C[0,1]\cap\cR_\varphi(\I)
\subset\mathrm{YBV}_\varphi(\I)$.
The polynomials show that this common subspace is dense in $C[0,1]$.
Moreover, the family
$\{t\mapsto e^{\alpha t}:\alpha\in\R\}$ is linearly independent, so
$C^1[0,1]$, and hence $\mathrm{YBV}_\varphi(\I)$, has Hamel dimension at
least $\mathfrak c$.  The reverse inequality follows from
$\#C[0,1]=\mathfrak c$.  Hence both Young-variation classes contain a
dense vector space of the largest possible dimension.  Neither is
spaceable, because each is contained in the scaled class already covered
by \cref{thm:intro-main}.
\end{proof}

\begin{remark}[Scaled versus raw variation]\label{rem:raw-linearity}
The answer to the lineability question is therefore affirmative for both
standard Young-variation conventions, in the strongest possible algebraic
and dense sense.  There is nevertheless a distinction between
\emph{being lineable} and \emph{being a vector space}.  The scaled class
is itself linear, even for a nonconvex admissible gauge.  If the local
$\delta_2$ condition holds, the scaled and raw Young classes coincide, so
the raw class is linear as well.  Without that condition the raw class can
fail to be closed under scalar multiplication, although it remains maximal
dense-lineable.  In \cref{ex:no-delta2} we construct $f$ such that
\[
 \Var_{\varphi_*}(f)=\infty,
 \qquad
 \Var_{\varphi_*}(f/2)<\infty.
\]
Thus $g=f/2$ belongs to the raw class whereas $2g$ does not.  For a
general nonconvex gauge, the scaled class remains a vector space but need
not contain the Lipschitz functions, so maximal dense-lineability need not
persist; see \cref{ex:sublinear-gauge}.
\end{remark}

\subsection{Generic unbounded variation}

The main theorem has a Baire-category strengthening inside every closed
infinite-dimensional subspace.  In particular, there is no conflict
between the density in \cref{prop:lineability} and the category-smallness
below: a dense vector subspace of a Banach space may be meagre.

\begin{theorem}\label{thm:meagre}
Let $F$ be an infinite-dimensional closed linear subspace of $C[0,1]$.
For every admissible variation gauge $\varphi$, the set
\[
 F\cap\cV_\varphi(\I)
\]
is a meagre $F_\sigma$ subset of $F$.  Equivalently,
$F\setminus\cV_\varphi(\I)$ is a dense $G_\delta$ subset of $F$.
\end{theorem}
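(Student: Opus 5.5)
First I reduce to a regular gauge: by \cref{lem:lsc-regularization}, $\cV_{\varphi_-}(\I)=\cV_\varphi(\I)$, so I may assume $\varphi$ is lower semicontinuous without changing the set $F\cap\cV_\varphi(\I)$. Then I write
\[
 F\cap\cV_\varphi(\I)=\bigcup_{N\ge1}F_N,
 \qquad
 F_N=\bigl\{f\in F:\Var_\varphi(f/N;\I)\le N\bigr\},
\]
exactly as in the proof of \cref{lem:baire}. The inclusion ``$\supset$'' is immediate from the definition of $\cV_\varphi$. For ``$\subset$'', take $\mu>0$ with $\Var_\varphi(\mu f)<\infty$. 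For $N$ large we have $1/N\le\mu$ and $N\ge\Var_\varphi(\mu f)$, so monotonicity of $\varphi$ gives $f\in F_N$. Each $F_N$ is closed in $F$ by the lower semicontinuity \eqref{eq:lsc}. Hence $F\cap\cV_\varphi(\I)$ is an $F_\sigma$ subset of $F$.

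Meagreness then reduces to showing that each $F_N$ has empty interior in $F$. Suppose instead that $f_0+rB_F^\circ\subset F_N$ for some $f_0\in F$ and $r>0$. The symmetrization step in the proof of \cref{lem:baire} uses only that $u=f_0+h$ and $v=f_0-h$ lie in $F_N$. So it applies verbatim and gives
\[
 \Var_\varphi\bigl(\tfrac{r}{2N}f;\I\bigr)\le 2N
 \qquad\text{for all } f\in F \text{ with } \norm f_\infty\le1 .
\]
In particular every $f\in F$ lies in $\cV_\varphi(\I)$, since one can rescale $f$ into the unit ball. Thus $F\subset\cV_\varphi(\I)$ is a closed subspace of $C[0,1]$ contained in the scaled class. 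By \cref{thm:intro-main}, applied to the original gauge or equivalently to the regularized one, $F$ is finite-dimensional, which contradicts the hypothesis. So each $F_N$ is closed and nowhere dense, and the union is meagre.

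The equivalent formulation follows by taking complements. The complement of an $F_\sigma$ set is a $G_\delta$ set, and the complement of a meagre set in the Banach space $F$ is residual, hence dense by Baire's theorem.

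I expect no real obstacle. The only point needing care is that the Baire argument of \cref{lem:baire} has to be run on the closed sets $F_N$ directly, rather than under the hypothesis $F\subset\cV_\varphi$. One must check that an interior point of a single $F_N$ already yields the uniform bound on the whole unit ball, and hence the inclusion $F\subset\cV_\varphi(\I)$.
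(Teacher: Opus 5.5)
Your proposal is correct and follows essentially the same route as the paper. Both regularize to $\varphi_-$ and decompose $F\cap\cV_\varphi(\I)$ into the closed sublevel sets $\{f\in F:\Var_{\varphi_-}(f/N;\I)\le N\}$. Both then show that an interior point of one of these sets gives, via the symmetrization from \cref{lem:baire}, a uniform bound on the unit ball, hence $F\subset\cV_\varphi(\I)$, which contradicts \cref{thm:intro-main}.
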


\begin{proof}
Let $\psi=\varphi_-$ be the regularization from
\cref{lem:lsc-regularization}.  By \eqref{eq:regularized-class},
$\cV_\varphi(\I)=\cV_\psi(\I)$.  As in \cref{lem:baire}, write
\[
 F\cap\cV_\varphi(\I)=F\cap\cV_\psi(\I)
 =\bigcup_{N=1}^\infty A_N,
 \qquad
 A_N=\left\{f\in F:\Var_\psi(f/N;\I)\le N\right\}.
\]
The sets $A_N$ are closed.  Suppose that some $A_N$ had nonempty
interior in $F$.  Then there would exist $f_0\in F$ and $r>0$ such that
$f_0+rB_F^\circ\subset A_N$.  Repeating the increment estimate from the
proof of \cref{lem:baire}, we would obtain
\[
 \norm{f}_\infty\le1,\quad f\in F
 \quad\Longrightarrow\quad
 \Var_\psi\left(\frac{r}{2N}f;\I\right)\le2N.
\]
For $g\in F\setminus\{0\}$, apply this estimate to
$f=g/\norm{g}_\infty$ to obtain
\[
 \Var_\psi\left(
   \frac{r}{2N\norm{g}_\infty}g;\I\right)\le2N.
\]
Thus every element of $F$ would belong to
$\cV_\psi(\I)=\cV_\varphi(\I)$, and
\cref{thm:intro-main} would force $F$ to be
finite-dimensional.  This contradiction shows that every $A_N$ is
nowhere dense.
\end{proof}

\begin{corollary}[Simultaneous gauges]\label{cor:countable}
Let $F$ be an infinite-dimensional closed subspace of $C[0,1]$, and let
$(\varphi_j)_{j\ge1}$ be any countable family of admissible variation
gauges.  Then
\[
 \left\{f\in F:
   \Var_{\varphi_j}(\lambda f;\I)=\infty
   \text{ for every }j\in\N\text{ and every }\lambda>0
 \right\}
\]
is residual in $F$.
\end{corollary}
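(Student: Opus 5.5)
The plan is to reduce the statement to \cref{thm:meagre} applied gauge by gauge, and then to take a countable intersection of residual sets inside the Banach space $F$.

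First I identify the set in the statement. For each fixed $j$, the condition ``$\Var_{\varphi_j}(\lambda f;\I)=\infty$ for every $\lambda>0$'' says exactly that $f\notin\cV_{\varphi_j}(\I)$. This is immediate from \cref{def:gauge}, since $\cV_{\varphi_j}(\I)$ consists of the functions for which some scale $\lambda>0$ gives finite variation. Hence the set in question equals
\[
 \bigcap_{j\ge1}\bigl(F\setminus\cV_{\varphi_j}(\I)\bigr).
\]

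Next I apply \cref{thm:meagre} to each admissible gauge $\varphi_j$ separately. Since $F$ is an infinite-dimensional closed subspace, each $F\setminus\cV_{\varphi_j}(\I)$ is a dense $G_\delta$ subset of $F$, in particular residual. A countable intersection of residual sets is residual, because its complement is a countable union of meagre sets and is therefore meagre. This gives the claim.

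As a supplement, $F$ is a closed subspace of the Banach space $C[0,1]$ and is therefore complete. By Baire's theorem the intersection is again a dense $G_\delta$ in $F$, so in particular it is nonempty.

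There is no real obstacle. The only point needing care is the quantifier over $\lambda$: one must read the condition as the complement of the scaled class rather than the raw class. Once that identification is made, the scale-uniformity is already built into \cref{thm:meagre}, and no additional argument over $\lambda$ is needed.
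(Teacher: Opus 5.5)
Your proposal is correct and follows essentially the same approach as the paper: the paper's proof is the one-line ``intersect the dense $G_\delta$ sets supplied by \cref{thm:meagre},'' and your identification of the set with $\bigcap_{j}\bigl(F\setminus\cV_{\varphi_j}(\I)\bigr)$ makes the implicit quantifier step explicit.
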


\begin{proof}
Intersect the dense $G_\delta$ sets supplied by
\cref{thm:meagre}.
\end{proof}

\begin{corollary}[All positive $p$]\label{cor:all-p}
In every infinite-dimensional closed subspace $F$ of $C[0,1]$, the set of
functions whose $p$-variation is infinite for every
$0<p<\infty$ is residual in $F$.
\end{corollary}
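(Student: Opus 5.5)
This will follow from \cref{cor:countable} applied to a suitable countable family of power gauges, together with a monotonicity argument in $p$ that passes from countably many exponents to all of them. Set $\varphi_j(t)=t^{p_j}$ for a countable set of exponents $p_j$, each an admissible variation gauge. \Cref{cor:countable} then shows that the set
\[
 \mathcal G=\bigl\{f\in F:\Var_{p_j}(\lambda f;\I)=\infty\ \text{for every } j \text{ and every }\lambda>0\bigr\}
\]
is residual in $F$. Taking $\lambda=1$, every $f\in\mathcal G$ has infinite $p_j$-variation for every $j$.

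It remains to pass to all $0<p<\infty$, and the choice of exponents is the key point. For continuous $f$ and $p<q$, finite $q$-variation does \emph{not} imply finite $p$-variation. The implication goes the other way: for $p\le q$,
\[
 \sum\abs{\Delta f}^q\le \osc(f;\I)^{q-p}\sum\abs{\Delta f}^p,
\]
since each $\abs{\Delta f}\le\osc(f;\I)$. Hence $\Var_q(f)\le\osc(f)^{q-p}\Var_p(f)$, and infinite $q$-variation forces infinite $p$-variation for every $p\le q$. So we need arbitrarily large exponents, and I would take $p_j=j$ for $j\in\N$.

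Given any $p\in(0,\infty)$, choose $j\ge p$. For $f\in\mathcal G$ we have $\Var_j(f)=\infty$, and the displayed inequality (with $\osc(f;\I)<\infty$ by continuity) gives $\Var_p(f)=\infty$. Thus $\mathcal G$ is contained in the set described in the corollary, and a superset of a residual set is residual.

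The only real obstacle is getting the direction of monotonicity in $p$ right, which dictates that the countable family must be unbounded above rather than, say, dense near zero. Note also that finite oscillation of a continuous function is what makes the comparison valid without any normalization.
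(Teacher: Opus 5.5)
Your proof is correct and follows the paper's route: apply \cref{cor:countable} to an unbounded countable family of power gauges (the paper uses rational $q\ge1$, you use the integers), then use $\Var_q(f)\le(2\norm{f}_\infty)^{q-p}\Var_p(f)$ for $p<q$ to pass to every exponent. The only difference is that you apply this comparison uniformly for all $0<p<\infty$, which makes the paper's separate first-hitting-time argument for $0<p<1$ unnecessary.
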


\begin{proof}
Apply \cref{cor:countable} to $\varphi_q(t)=t^q$ for rational
$q\ge1$.  If a bounded function $f$ has finite $p$-variation and $q>p$,
then
\[
 \Var_q(f;\I)
 \le(2\norm{f}_\infty)^{q-p}\Var_p(f;\I)<\infty.
\]
Thus finite $p$-variation for any real $p\ge1$ would imply finite
$q$-variation for some rational $q>p$, which is excluded on the residual
set just obtained.  Every function in that residual set has infinite
$1$-variation and is therefore nonconstant.  The first-hitting-time
argument in the proof of \cref{cor:pvar} shows that every nonconstant
continuous function has infinite $p$-variation whenever $0<p<1$.
\end{proof}

\begin{remark}[Generic failure of global H\"older regularity]
	\label{rem:no-holder}
	As a consequence of \cref{cor:all-p}, if $F$ is an
	infinite-dimensional closed subspace of $C[0,1]$, then
	\[
	\bigl\{f\in F:
	f\notin C^{0,\alpha}([0,1])
	\text{ for every }0<\alpha\le1
	\bigr\}
	\]
	is residual in $F$. Indeed, if $f\in C^{0,\alpha}([0,1])$, then
	$|f(t)-f(s)|\le L|t-s|^\alpha$ for some $L>0$ and all $s,t\in[0,1]$.
	Thus, for every partition $0=t_0<\cdots<t_n=1$, we have
	$\sum_{j=1}^n |f(t_j)-f(t_{j-1})|^{1/\alpha}
	\le L^{1/\alpha}\sum_{j=1}^n(t_j-t_{j-1})=L^{1/\alpha}$.
	Hence $\Var_{1/\alpha}(f;\I)<\infty$, which is excluded on the
	residual set furnished by \cref{cor:all-p}.
\end{remark}

\section{Examples and sharpness}
\label{sec:examples}

We finish with examples that separate the scaled and raw conventions and
clarify the scope of the hypotheses.

\begin{example}[A Young function without local $\delta_2$]\label{ex:no-delta2}
Define
\[
 \varphi_*(t)=
 \begin{cases}
  0, & t=0,\\[2pt]
  e^{-1/t}, & 0<t<\tfrac14,\\[2pt]
  (16t-3)e^{-4}, & t\ge\tfrac14.
 \end{cases}
\]
For $0<t<1/4$ one has
\[
 \varphi_*''(t)=e^{-1/t}\frac{1-2t}{t^4}>0.
\]
Moreover, the value and the left derivative at $t=1/4$ are respectively
$e^{-4}$ and $16e^{-4}$, which agree with the linear branch, while
$\varphi_*(t)\to0$ as $t\downarrow0$.  Thus $\varphi_*$ is a convex,
strictly increasing, unbounded Young function.  However,
\[
 \frac{\varphi_*(2t)}{\varphi_*(t)}
 =e^{1/(2t)}\longrightarrow\infty
 \qquad(t\downarrow0,\ 0<t<\tfrac18),
\]
so the local $\delta_2$ condition at zero fails.  The conclusions of
\cref{thm:intro-main,thm:meagre} still apply.

This example also shows that the scaled class can be strictly larger than
the raw class.  Fix $N_0$ so that $a_n:=1/\log(n+1)<1/4$ for
$n\ge N_0$.  For each such $n$, let $p_n$ be the nonnegative triangular
function supported on $I_n=[4^{-n},2\cdot4^{-n}]$, affine on either side
of the midpoint of $I_n$, and of height $a_n$ there.  Put
$f=\sum_{n\ge N_0}p_n$.  The intervals $I_n$ are pairwise disjoint and
accumulate only at $0$.  Because the supports are disjoint, the uniform
norm of every tail is $\sup_{n\ge N}a_n\to0$; hence the series converges
uniformly and $f\in C[0,1]$.

Figure~\ref{fig:raw-scaled-peaks} summarizes both the geometry of this
function and the effect of changing the scale from $1$ to $1/2$.

\begin{figure}[tbp]
	\centering
	\begin{tikzpicture}[
		x=0.72cm,
		y=1.18cm,
		>=Stealth,
		font=\small,
		axis/.style={->,gray!75!black,line width=.75pt},
		full/.style={blue!65!black,line width=1.35pt},
		half/.style={
			orange!85!black,
			line width=1.25pt,
			dash pattern=on 4.5pt off 2.5pt
		},
		guide/.style={gray!60,line width=.6pt}
		]
		
		% Panel (a): profiles of f and f/2.
		\node[font=\normalsize] at (4.15,2.45)
		{\textup{(a)} Profiles of $f$ and $f/2$};
		
		\draw[axis] (0,0)--(8.55,0)
		node[right,text=black] {$t$};
		\draw[axis] (0,0)--(0,2.08);
		\node[rotate=90] at (-.43,1.04) {value};
		\node[below=2pt] at (0,0) {$0$};
		
		\draw[full] (.40,0)--(.55,.53)--(.70,0);
		\draw[half] (.40,0)--(.55,.265)--(.70,0);
		
		\draw[full] (.94,0)--(1.18,.69)--(1.42,0);
		\draw[half] (.94,0)--(1.18,.345)--(1.42,0);
		
		\draw[full] (1.78,0)--(2.14,.87)--(2.50,0);
		\draw[half] (1.78,0)--(2.14,.435)--(2.50,0);
		
		\draw[full] (2.98,0)--(3.52,1.08)--(4.06,0);
		\draw[half] (2.98,0)--(3.52,.54)--(4.06,0);
		
		\draw[full] (4.65,0)--(5.40,1.31)--(6.15,0);
		\draw[half] (4.65,0)--(5.40,.655)--(6.15,0);
		
		\draw[full] (6.60,0)--(7.48,1.58)--(8.36,0);
		\draw[half] (6.60,0)--(7.48,.79)--(8.36,0);
		
		\node[
		blue!65!black,
		above right=-1pt,
		fill=white,
		inner sep=1pt
		] at (7.48,1.58) {$f$};
		
		\node[
		orange!85!black,
		right,
		fill=white,
		inner sep=1pt
		] at (7.54,.77) {$f/2$};
		
		\node[below=4pt] at (7.48,0) {$I_{N_0}$};
		\node[below=4pt] at (2.14,0) {$I_n$};
		
		\draw[->,guide] (4.08,1.76)--(.72,1.76)
		node[
		midway,
		above=1pt,
		text=black,
		font=\footnotesize
		] {$n\to\infty,\qquad I_n\to\{0\}$};
		
		% Panel (b): cumulative modular costs.
		\begin{scope}[shift={(9.55,0)}]
			
			\node[font=\normalsize] at (3.85,2.45)
			{\textup{(b)} Cumulative modular cost};
			
			\draw[axis] (.65,0)--(7.65,0)
			node[right,text=black] {$N$};
			\draw[axis] (.65,0)--(.65,2.08);
			\node[rotate=90] at (.16,1.04) {$S_N$};
			
			\draw[
			densely dotted,
			gray!65,
			line width=.7pt
			] (.82,.90)--(7.15,.90);
			
			\draw[
			full,
			->,
			smooth,
			domain=1:7.15,
			samples=100
			] plot (\x,{.15+.82*ln(\x)});
			
			\draw[
			half,
			smooth,
			domain=1:7.15,
			samples=100
			] plot (\x,{.90*(1-1/\x)});
			
			\node[
			blue!65!black,
			anchor=west,
			fill=white,
			inner sep=2pt,
			font=\footnotesize
			] at (3.72,1.47)
			{$S_N(f)\nearrow\infty$};
			
			\node[
			orange!85!black,
			anchor=west,
			fill=white,
			inner sep=2pt,
			font=\footnotesize
			] at (3.35,.65)
			{$S_N(f/2)\nearrow S_\infty<\infty$};
			
		\end{scope}
	\end{tikzpicture}
	
	\caption{Separation of raw and scaled Young variation. Both panels are
		schematic. In panel~\textup{(a)}, the disjoint supports
		$I_n=[4^{-n},2\cdot4^{-n}]$ have been enlarged and separated for
		legibility; they accumulate at $0$, and the $n$th peak of
		$f=\sum_{n\ge N_0}p_n$ has height $a_n=1/\log(n+1)$. Its two monotone
		sides contribute $2\varphi_*(a_n)=2/(n+1)$ at scale $1$ and
		$2\varphi_*(a_n/2)=2/(n+1)^2$ at scale $1/2$. Panel~\textup{(b)}
		represents the corresponding partial sums
		$S_N(f):=2\sum_{n=N_0}^{N}\varphi_*(a_n)$ and
		$S_N(f/2):=2\sum_{n=N_0}^{N}\varphi_*(a_n/2)$. The first diverges,
		whereas the second converges. Consequently,
		$f\in\cV_{\varphi_*}(\I)\setminus\cR_{\varphi_*}(\I)$ and
		$f/2\in\cR_{\varphi_*}(\I)$.}
	\label{fig:raw-scaled-peaks}
\end{figure}

We now justify the exact variation computation.  Convexity and
$\varphi_*(0)=0$ imply superadditivity on $[0,\infty)$.  Indeed, if
$u+v>0$, then
\[
 \varphi_*(u)\le\frac{u}{u+v}\varphi_*(u+v),
 \qquad
 \varphi_*(v)\le\frac{v}{u+v}\varphi_*(u+v),
\]
and hence
\[
 \varphi_*(u)+\varphi_*(v)\le\varphi_*(u+v).
\]
The case $u=v=0$ is immediate.  Thus a partition of either monotone side
of a peak contributes at most
$\varphi_*(a_n)$.  If $x,y\in[0,a_n]$ are values on opposite sides of
that peak, then
$\abs{x-y}\le\max\{a_n-x,a_n-y\}$, so inserting the apex cannot decrease
the variation sum.  Likewise, for values $x,y\ge0$ on distinct peaks,
\[
 \varphi_*(\abs{x-y})\le\varphi_*(x)+\varphi_*(y),
\]
so inserting a zero point between the peaks cannot decrease the sum.
Given a finite partition, only finitely many peaks contain partition
points.  Insert their apices and enough zero points in the complementary
gaps to separate those peaks.  The preceding inequalities show that this
finite refinement cannot decrease the variation sum.  Splitting it into
monotone sides then bounds its contribution by
$2\sum_{n\ge N_0}\varphi_*(a_n)$.  Conversely, partitions containing the
base endpoints and apices of any finite collection of peaks give the
corresponding finite partial sum.
\begin{samepage}
Therefore
\begin{align*}
 \Var_{\varphi_*}(f;\I)
 &=2\sum_{n\ge N_0}\varphi_*(a_n)
   =2\sum_{n\ge N_0}\frac1{n+1}=\infty,\\
 \Var_{\varphi_*}(f/2;\I)
 &=2\sum_{n\ge N_0}\varphi_*(a_n/2)
   =2\sum_{n\ge N_0}\frac1{(n+1)^2}<\infty.
\end{align*}
\end{samepage}
\end{example}

\begin{example}[A nonconvex gauge with a minimal continuous class]
\label{ex:sublinear-gauge}
The convexity assumption in the dense-lineability part of
\cref{prop:lineability} cannot simply be dropped.  Let
$\varphi(t)=\sqrt t$, which is an admissible but nonconvex gauge, and let
$f\in C[0,1]$ be nonconstant.  Fix $s<t$ with
$d=\abs{f(t)-f(s)}>0$ and, after replacing $f$ by $-f$ if necessary,
suppose that $f(s)<f(t)$.  For each $n\ge2$, set $t_0=s$, $t_n=t$, and,
successively, put
\[
 t_k=\min\left\{u\in[t_{k-1},t]:
 f(u)=f(s)+\frac{kd}{n}\right\}
 \qquad(1\le k\le n-1).
\]
The sets in braces are nonempty and compact by the intermediate value
theorem, and their minima give a strictly increasing chain with
$\abs{f(t_k)-f(t_{k-1})}=d/n$.  After adjoining $0$ and $1$, it is part
of a partition of $\I$.  Consequently, for every
$\lambda>0$,
\[
 \Var_\varphi(\lambda f;\I)
 \ge n\sqrt{\lambda d/n}
 =\sqrt{\lambda dn}\longrightarrow\infty.
\]
Hence the continuous scaled class consists exactly of the constant
functions.  The main finite-dimensionality theorem still applies, but
there is no infinite-dimensional lineability in this example.  The same
calculation with $\varphi(t)=t^p$ shows that, for $0<p<1$, a continuous
function of finite $p$-variation must be constant, which explains the
stronger conclusion in \cref{cor:pvar} for this range.
\end{example}

\begin{remark}[Sharpness]\label{rem:sharpness}
Closedness cannot be omitted: the polynomials form an
infinite-dimensional, nonclosed subspace of $C[0,1]$ consisting of
functions of bounded Jordan variation.  More generally, for every convex
Young function $\varphi$, every Lipschitz function belongs to
$\cV_\varphi$, so the same example applies.  On the other hand, the
finite-dimensional conclusion cannot be replaced by a universal bound on
the dimension: polynomial spaces of arbitrarily large finite dimension
are closed and lie in $\cV_\varphi$.  Finally, positivity away from zero is
essential.  If a finite nondecreasing gauge vanishes at some $a>0$, then it
vanishes on $[0,a]$.  Given $f\in C[0,1]$, choose $\lambda>0$ so small that
$\lambda\osc(f;\I)\le a$.  Every increment of $\lambda f$ then has zero
gauge cost, and hence $\Var_\varphi(\lambda f;\I)=0$.  Thus the scaled
class is all of $C[0,1]$, and the finite-dimensional conclusion fails.
\end{remark}

\end{document}